\def\az{\alpha}
\def\XXint#1#2#3{{\setbox0=\hbox{$#1{#2#3}{\int}$ }
\vcenter{\hbox{$#2#3$ }}\kern-.6\wd0}}
\def\B{{B}_{p,q}^\az}
\def\({\left(}
\def \){ \right)}
 \def\dim{\operatorname{dim}}
\newtheorem{theorem}{Theorem}[section]
\newtheorem{lemma}[theorem]{Lemma}
\newtheorem{corollary}[theorem]{Corollary}
\newtheorem{proposition}[theorem]{Proposition}
\theoremstyle{definition}
\newtheorem{remark}[theorem]{Remark}
\renewcommand{\appendix}{\par
   \setcounter{section}{0}%
   \setcounter{subsection}{0}%
   \setcounter{subsubsection}{0}%
   \gdef\thesection{\@Alph\c@section}%
   \gdef\thesubsection{\@Alph\c@section.\@arabic\c@subsection}%
   \gdef\theHsection{\@Alph\c@section.}%
   \gdef\theHsubsection{\@Alph\c@section.\@arabic\c@subsection}%
   \csname appendixmore\endcsname
 }
\numberwithin{equation}{section}
\begin{document}

\arraycolsep=1pt

\title{\bf\Large An Upbound of Hausdorff's Dimension of the Divergence Set of the fractional Schr\"odinger Operator on $H^s(\mathbb R^n)$
\footnotetext{\hspace{-0.35cm} 2010 {\it
Mathematics Subject Classification}.42B37, 42B15.
\endgraf {\it Key words and phrases.} the Carleson problem, divergence set, the fractional Schr\"{o}dinger operator, Hausdorff dimension, Sobolev space.
%\endgraf
%This work was partially supported by
 %NSFC-DFG  (Grant Nos.
%11761131002).
}
}
\author{ Dan Li, Junfeng Li\footnote{Corresponding author} ~and Jie Xiao}
\date{}
\maketitle

\vspace{-0.7cm}

\begin{center}
\begin{minipage}{13cm}
{\small {\bf Abstract}\quad
This paper shows
$$
\sup_{f\in H^s(\mathbb{R}^n)}\dim _H\left\{x\in\mathbb{R}^n:\  \lim_{t\rightarrow0}e^{it(-\Delta)^\alpha}f(x)\neq f(x)\right\}\leq n+1-\frac{2(n+1)s}{n}\ \ \text{under}\ \
\begin{cases}
n\geq2;\\
\alpha>\frac12;\\
 \frac{n}{2(n+1)}<s\leq\frac{n}{2} .
\end{cases}
$$	

}
\end{minipage}
\end{center}

%\tableofcontents

\section{Introduction}\label{s1}
\subsection{Statement of Theorem \ref{theorem 1.1}}\label{s11}
 From now on, suppose that $\mathcal{S}(\mathbb R^n)$ is the Schwartz space of all functions $f:\mathbb R^n\to \mathbb C$ such that $$
f\in C^\infty(\mathbb R^n)\ \  \&\ \ \lim_{|x|\to\infty}x^\beta\partial^\gamma f(x)=0\ \ \forall\ \ \text{multi-indices}\ \ \beta,\gamma.
$$
Also, let $H^s(\mathbb{R}^n)$ be the $\mathbb R\ni s$-Sobolev space of all tempered distributions $f\in \mathcal{S}'(\mathbb R^n)$ whose Fourier transforms $\hat{f}$ obey
$$\|f\|_{H^s(\mathbb{R}^n)}=\left(\int_{\mathbb{R}^n} \left(1+|\xi|^2\right)^s \left|\hat{f}(\xi)\right|^2d\xi\right)^{\frac{1}{2}}<\infty.
$$
If $(-\Delta)^{\alpha}f$ stands for the $(0,\infty)\ni\alpha$-pseudo-differential operator defined by the Fourier transformation acting on $f\in \mathcal{S}'(\mathbb R^n)$:
$$
((-\Delta)^\alpha f)^{\wedge}(x)=|x|^{2\alpha}\hat{f}(x)\ \ \forall\ \ x\in\mathbb R^n,
$$
then
\begin{align}\label{Boussinesq}
u(x,t)
=e^{it(-\Delta)^\alpha}f(x)
=(2\pi)^{-n}\int_{\mathbb{R}^n} e^{ix\cdot\xi}e^{it|\xi|^{2\alpha}}\hat{f}(\xi)d\xi
\end{align}
exists as a distributional solution to
the $\alpha$-Schr\"odinger equation:
\begin{equation}\label{Schrodinger equation}
\left\{
\begin{aligned}
&\big(i\partial_t+(-\Delta)^{\alpha}\big)u(x,t)=0\ \ \forall\ \  (x,t)\in\mathbb{R}^n\times\mathbb{R};\\
&u(\cdot,0)=f(\cdot)\in H^s(\mathbb R^n). \\
\end{aligned}
\right.
\end{equation}

While understanding the  Carleson problem of deciding such a critical regularity number $s_c$ that
\begin{align}\label{ConvergenceS}
\lim_{t\rightarrow0}e^{it(-\Delta)^\alpha}f(x)=f(x)\ \ \text{a.e.}\ \  x\in\mathbb{R}^n
\ \
\text{holds for all}\ \ f\in H^{s}(\mathbb{R}^n) \ \  \& \ \ s>s_c,
\end{align}
we are suggested to determine the Hausdorff dimension of the divergence set of the $\alpha$-Schr\"odinger operator $e^{it(-\Delta)^{\alpha}}f(x)$:
\begin{align}\label{Dimension}
\mathsf{d}(s,n,\alpha)=\sup_{f\in H^s(\mathbb{R}^n)}\dim _H\left\{x\in\mathbb{R}^n: \lim_{t\rightarrow0}e^{it(-\Delta)^\alpha}f(x)\neq f(x)\right\},
\end{align}
thereby discovering the case $\alpha>\frac12$:

\begin{theorem}
	\label{theorem 1.1}
\begin{equation}\label{s}
\mathsf{d}(s,n,\alpha)\leq n+1-\frac{2(n+1)s}{n}\ \ \text{under}\ \ n\geq2\ \  \& \ \  \alpha>\frac12 \ \  \& \ \ \frac{n}{2(n+1)}<s\leq\frac{n}{2}  .
\end{equation}
\end{theorem}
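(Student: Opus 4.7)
\medskip
\noindent\textbf{Proof proposal.} The plan is to combine a Frostman-measure reduction with a frequency-localized weighted maximal estimate obtained via $TT^*$ and oscillatory-integral analysis. The first step is a standard reduction via Frostman's lemma: to show $\mathsf{d}(s,n,\alpha)\le d_0$ with $d_0=n+1-\frac{2(n+1)s}{n}$, it suffices to prove, for every $\alpha_0>d_0$ and every compactly supported probability measure $\mu$ on $\mathbb R^n$ satisfying $\mu(B(x,r))\le r^{\alpha_0}$, a weighted maximal estimate of the form
\begin{equation*}
\left\|\sup_{0<t<1}\bigl|e^{it(-\Delta)^\alpha}f-f\bigr|\right\|_{L^2(d\mu)}\lesssim\|f\|_{H^s(\mathbb R^n)}.
\end{equation*}
If this holds for every such $\mu$, then every divergence set has vanishing $\mu$-mass, and Frostman's lemma yields the desired dimension bound.

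Next, I would linearize the supremum by choosing a measurable selector $t:\mathbb R^n\to(0,1)$, and perform a Littlewood--Paley decomposition $f=\sum_N f_N$ with $\widehat{f_N}$ supported on the annulus $\{|\xi|\sim N\}$, so that $\|f_N\|_{L^2}\lesssim N^{-s}\|f\|_{H^s}$. The task is then to prove the frequency-localized weighted bound
\begin{equation*}
\left\|e^{it(\cdot)(-\Delta)^\alpha}f_N\right\|_{L^2(d\mu)}\lesssim N^{\gamma(\alpha_0)}\|f_N\|_{L^2}
\end{equation*}
with an exponent $\gamma(\alpha_0)<s$ whenever $\alpha_0>d_0$, and then to sum the resulting geometric series in $N$, using also a low-frequency piece treated by the continuity of $e^{it(-\Delta)^\alpha}f$ for Schwartz data.

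The heart of the argument is the frequency-localized estimate, which I would attack by $TT^*$. This transforms matters into bounding an oscillatory-integral kernel
\begin{equation*}
K_N(x,y)=\int_{\mathbb R^n}e^{i(x-y)\cdot\xi+i(t(x)-t(y))|\xi|^{2\alpha}}\chi\!\left(\tfrac{|\xi|}{N}\right)d\xi
\end{equation*}
acting as an operator on $L^2(d\mu)$. The hypothesis $\alpha>1/2$ provides nondegenerate curvature of the phase in $\xi$, so stationary-phase analysis yields dispersive decay in $|x-y|$ and $|t(x)-t(y)|$; coupling this decay with the Frostman bound $\mu(B(y,r))\le r^{\alpha_0}$ via a Schur test (and a dyadic decomposition of the $|x-y|$ scales) produces the desired $N$-loss $\gamma(\alpha_0)$.

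The main obstacle is the fine balance between dispersive decay and the Frostman exponent. The factor $n+1$ in $d_0$ strongly suggests that the natural geometric object is the space--time measure $d\mu\otimes dt$ of dimension $\alpha_0+1$ in $\mathbb R^{n+1}$, and that the sharp version of the weighted $L^2$-bound should come from a Stein--Tomas/restriction-type estimate for the fractional Schr\"odinger surface $\{(\xi,|\xi|^{2\alpha}):\xi\in\mathbb R^n\}\subset\mathbb R^{n+1}$ tested against this measure. Coordinating the curvature input from $\alpha>1/2$ with the dimensional input $\alpha_0+1$ so that $\gamma(\alpha_0)$ lands precisely on the threshold $\tfrac{2(n+1)s}{n}$ is, I expect, where the technical work lies; everything else fits into the well-established Frostman/linearization/$TT^*$ template.
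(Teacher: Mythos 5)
Your Frostman reduction, Littlewood--Paley decomposition, and the observation that the correct geometric object is the space--time measure $d\mu\otimes dt$ in $\mathbb R^{n+1}$ all match the paper's setup (Proposition \ref{theorem 1.3}, Corollary \ref{theorem 1.4}, and the statement of Theorem \ref{theorem 1.5}). The genuine gap is in the step you yourself flag as ``where the technical work lies'': the frequency-localized weighted $L^2$ bound with the sharp loss $N^{\gamma(\alpha_0)}$, $\gamma(\alpha_0)=\frac{\kappa}{2(n+1)}+\frac{n-\kappa}{2}$, cannot be obtained from the $TT^*$ / stationary-phase / Schur-test template you propose. That approach produces only $L^1$-type kernel bounds, and it is exactly the method behind the earlier results \eqref{eSS} and \eqref{BBCR} of Sj\"ogren--Sj\"olin and Barcel\'o--Bennett--Carbery--Rogers; it tops out at exponents like $n+1-2s$ or $\frac{3n}{2}+1-4s$, not $n+1-\frac{2(n+1)s}{n}$. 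There is no clever choice of Schur weight or dyadic decomposition of $|x-y|$ that recovers the factor $\frac{n}{n+1}$ in the exponent, because the kernel estimate throws away the cancellation between wave packets that is the entire point of the modern method.

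What the paper actually does to prove the crucial Theorem \ref{theorem 1.5} is quite different in kind, not just in detail: it reduces to a fractal $L^2\to L^{p}$ restriction-type estimate at the Stein--Tomas exponent $p=\frac{2(n+1)}{n-1}$ (Theorem \ref{theorem 2.1}), and proves that by a Bourgain--Guth broad--narrow decomposition combined with Bourgain--Demeter $l^2$-decoupling (Lemma \ref{lemma 3.6}), parabolic rescaling, induction on scales, and the refined Strichartz estimates of Du--Guth--Li and Du--Zhang adapted to general phases in the class $\mathcal{NPF}(L,c_0)$ (Theorems \ref{theorem 3.1} and \ref{theorem 3.4}). The pigeonholing over dyadic scales of $\lambda$, $\gamma$, wave-packet mass, etc., is what lets one exploit the interaction between the Frostman condition and the wave-packet geometry in a way no pointwise kernel bound can. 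So your outline is sound up to and including the reduction to a frequency-localized weighted estimate, but the engine you propose for that estimate would fail to reach the claimed threshold; you need the decoupling / refined-Strichartz machinery in place of $TT^*$ and Schur.
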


\subsection{Relevance of Theorem \ref{theorem 1.1}}\label{s12} Here, it is appropriate to say more words on evaluating $\mathsf{d}(s,n,\alpha)$.

\begin{itemize}
\item[$\rhd$] In general, we have the following development.

\begin{itemize}
\item Theorem \ref{theorem 1.1} actually recovers Cho-Ko's \cite{CK} a.e.-convergence result:
$$
f\in H^{s}(\mathbb{R}^n) \ \ \& \ \ s>\frac{n}{2(n+1)}\Rightarrow
\lim_{t\rightarrow0}e^{it(-\Delta)^\alpha}f(x)=f(x)\quad\textrm{a}.\textrm{e}.~ x\in\mathbb{R}^n.
$$

\item A trivial part of Theorem \ref{theorem 1.1} reveals:
$$
\|f\|_{L^\frac{2n}{n-2\alpha}(\mathbb R^n)}\lesssim \|f\|_{H^s(\mathbb R^n)}
\Rightarrow
\mathsf{d}(s,n<2s,\alpha)=0.
$$
Moreover, Theorem \ref{theorem 1.1} improves \eqref{BBCR} under
$$
\frac{n}{2(n+1)}<s\le\frac{n+1}{4},
$$
as stated below:
\begin{itemize}
	\item In \cite{SS}
	Sj\"{o}gren-Sj\"{o}lin showed
	\begin{equation}
	\label{eSS}
	\mathsf{d}(s,n,\alpha)<n+1-2s\ \ \text{as}\ \ \frac{1}{2}<s\leq\frac{n}{2} \ \  \& \ \  \alpha>\frac12.
	\end{equation}
	\item
	In \cite{BBCR} and \cite{Z} it was proved by  Barcel\'{o}-Bennett-Carbery-Rogers and \v{Z}ubrini\'{c} that
	\begin{align}\label{ZBBCR}
	\mathsf{d}(s,n,\alpha)=n-2s\ \ \text{as}\ \ \frac{n}{4}\leq s\leq\frac{n}{2}.
	\end{align}
	
	 \item In \cite{BBCR} Barcel\'{o}-Bennett-Carbery-Rogers gave
	\begin{equation}\label{BBCR}
	\mathsf{d}(s,n,\alpha)\leq
	\begin{cases}
	n+1-2s\ \ \text{as}\ \  \frac{1}{2}<s\leq\frac{n}{4}; \\
	\frac{3n}{2}+1-4s\ \ \text{as}\ \ \frac{n}{4}<s\leq\frac{n+1}{4};\\
	n-2s\ \ \text{as}\ \ \frac{n+1}{4}<s\leq\frac{n}{2}.
	\end{cases}
	\end{equation}
\end{itemize}
\end{itemize}

\item[$\rhd$] In particular, we have the following case-by-case treatment.
\begin{itemize}

\item {\it Case} $\alpha=1$. Under this setting, Theorem \ref{theorem 1.1} coincides with Du-Zhang's \cite[Theorem 2.4]{DZ} since \eqref{Boussinesq} turns out to be the classical Sch\"{o}dinger operator $e^{-it\Delta}f(x)$. \eqref{ConvergenceS} was first proposed in \cite{C} by Carleson for this special case, and then intensively studied in e.g. \cite{B3,B1,B2,L,MYZ,MVV,S1,S3,TV,V2,V1}. Upon combining the results in \cite{C,DK,B2,DGL,DZ}, we conclude $s_{c}=\frac{n}{2(n+1)}$.
Furthermore, in \cite{SS} Sj\"{o}gren-Sj\"{o}lin considered $\mathsf{d}(s,n,1)$. Note that the Sobolev embedding ensures $\mathsf{d}(s,n<2s,1)=0$. So it is enough to calculate $\mathsf{d}(s,n\ge 2s,1)$.
\begin{itemize}
\item
Bourgain's counterexample in \cite{B2} and
Luc\`{a}-Rogers' result in \cite{LR2} showed
$$\mathsf{d}(s,n,1)= n\ \ \text{as}\ \ s\leq\frac{n}{2(n+1)}.
$$
\item The results in \v{Z}ubrini\'{c} \cite{Z} and Barcel\'{o}-Bennett-Carbery-Rogers \cite{BBCR} found $$
\mathsf{d}(s,n,1)=n-2s\ \ \text{as}\ \ \frac{n}{4}\le s\le\frac{n}{2}.
$$
Accordingly,
$$ ~\frac{n}{2(n+1)}=\frac{n}{4}=\frac14\Rightarrow\mathsf{d}(s,1,1)=1-2s.
$$
\item On the one hand, in \cite{DZ} Du-Zhang proved
$$\mathsf{d}(s,n,1)\leq n+1-\frac{2(n+1)s}{n} \ \ \text{as}\ \ \frac{n}{2(n+1)}<s<\frac{n}{4}\ \ \&  \ \ n\geq2.$$
On the other hand, in \cite{LR2, LR1} Luc\`{a}-Rogers obtained
$$
\mathsf{d}(s,n,1)\geq
\left\{
\begin{aligned}
 &n+\frac{n}{n-1}-\frac{2(n+1)s}{n-1} \ \ \text{as}\ \ \frac{n}{2(n+1)}\leq s<\frac{n+1}{8};\\
&n+1-\frac{2(n+2)s}{n}\ \ \text{as}\ \  \frac{n+1}{8}\leq s<\frac{n}{4}. \\
\end{aligned}
\right.
$$
Thus there is still a gap to determine the exact value of $\mathsf{d}(s,n,1)$; see also \cite{DGLZ,DZ,LR3,LR1,LR2} for more information.
\end{itemize}

\item {\it Case} $\alpha\in (2^{-1},\infty)$. Sj\"{o}lin \cite{S1} proved  $s_c=2^{-2}$ for $n=1$. By the iterative argument developed in \cite{B1}, Miao-Yang-Zheng \cite{MYZ} proved that \eqref{ConvergenceS} holds for $$
s>\frac{3}{8}\ \ \&\ \ n=2.
$$ Very recently, Cho-Ko \cite{CK} proved that \eqref{ConvergenceS} holds for $$s>\frac{n}{2(n+1)}\ \ \&\ \ n\ge 2.
$$ It seems that the case $\alpha>2^{-1}$ shares the same critical index with the case $\alpha=1$. So far there has been no counterexample to verify this problem.

\item {\it Case} $\alpha\in (0,2^{-1}]$. It is uncertain that Theorem \ref{theorem 1.1} can be extended to the fractional Schr\"odinger operator $e^{it(-\Delta)^{\alpha}}f(x) \ \ \& \ \  0<\alpha\leq 2^{-1}$. So, an investigation of this extension coupled with the foregoing counterexample will be the subject of future articles.
\end{itemize}
\end{itemize}

In the sequel of this paper, we always assume $\alpha>\frac12$.

 In \S\ref{s2}, we verify Theorem \ref{theorem 1.1} via Proposition \ref{theorem 1.3} \& Theorem \ref{theorem 1.5} - a global $L^{1}$ \& a local $L^2$ estimates for the maximal operator living on a compactly-supported Borel measure and $e^{it(-\Delta)^{\alpha}}f(x)$. However, the proof of Theorem \ref{theorem 1.5} is given in \S \ref{s3} via Theorem \ref{theorem 2.1} - an $L^{\frac{2(n+1)}{n-1}}$-estimate for $e^{it(-\Delta)^{\alpha}}f(x)$ and its Corollary \ref{corollary 2.2} - an $L^2$-estimate for $e^{it(-\Delta)^{\alpha}}f(x)$. Thanks to a highly nontrivial analysis, \S \ref{s4} is devoted to presenting a proof of Theorem \ref{theorem 2.1} which essentially relies on Theorems \ref{theorem 3.1}\&\ref{theorem 3.4} - the broad $1\le k\le n+1$ linear refined Strichartz estimates in dimension $n+1$ and Lemma \ref{lemma 3.6} - the narrow $L^\frac{2(n+1)}{n-1}$-estimate for $ e^{it(-\Delta)^{\alpha}}f(x)$.

\smallskip

\noindent {\it Notation.} In what follows, $A\lesssim B$ stands for $A\le C B$ for a constant $C>0$ and $A\thicksim B$ means  $A\lesssim B\lesssim A$. Further more, for given large number $R$ and small enough $0<\epsilon<1$,  $A\lessapprox B$ stands for $A\le C  R^\epsilon B$ for a constant $C>0$ and $A\thickapprox B$ means  $A\lessapprox B\lessapprox A$.

\section{Theorem \ref{theorem 1.5}$\Rightarrow$ Theorem \ref{theorem 1.1}}\label{s2}
\subsection{Proposition \ref{theorem 1.3} \& its proof} In order to determine the Hausdorff dimension of the divergence set of $e^{it(-\Delta)^\alpha}f(x)$, we need a law for $H^s(\mathbb R^n)$ to be embedded into $L^1(\mu)$ with a lower dimensional Borel measure $\mu$ on $\mathbb R^n$.

\begin{proposition}\label{theorem 1.3}
For a nonnegative Borel measure $\mu$ on $\mathbb R^n$ and $0\le\kappa\le n$, let
$$
C_\kappa(\mu)=\sup_{(x,r)\in\mathbb{R}^n\times(0,\infty)}r^{-\kappa}{\mu\big(B^n(x,r)\big)}\ \ \text{with}\ \ B^n(x,r)=\{y\in\mathbb R^n: |y-x|<r\}
$$
and $M^\kappa(\mathbb{B}^n)$ be the class of all probability measures $\mu$ with $C_\kappa(\mu)<\infty$ and being supported in the unit ball $\mathbb{B}^n=B^n(0,1)$.
Suppose
$$
\begin{cases}
0<s\le\frac{n}{2};\\
\kappa>\kappa_0\ge n-2s;\\
(N,f,\mu)\in [1,\infty)\times H^s(\mathbb R^n)\times M^\kappa(\mathbb B^n);\\
\psi(r)=\exp(-r^2);\\
e_N^{it(-\Delta)^\alpha}f(x)=(2\pi)^{-n}\int_{\mathbb{R}^n} \psi\left(\frac{|\xi|}{N}\right)e^{i(x\cdot\xi+t|\xi|^{2\alpha})}\hat{f}(\xi)d\xi.
\end{cases}
$$
\begin{itemize}
\item[\rm (i)] If $t\in\mathbb R$, then

\begin{align}\label{1.3b1}
\left\|\sup_{1\le N<\infty }\left|e_N^{it(-\Delta)^\alpha}f\right|\right\|_{L^1(\mu)}\lesssim\sqrt{C_\kappa(\mu)}\|f\|_{H^s(\mathbb{R}^n)}.
\end{align}
\item[\rm (ii)] If
\begin{equation}\label{eq:1} \left\|\sup_{0<t<1}\left|e^{it(-\Delta)^\alpha}f\right|\right\|_{L^1(\mu)}\lesssim \sqrt{C_\kappa(\mu)}\|f\|_{H^s(\mathbb{R}^n)},
\end{equation}
then $\mathsf{d}(s,n,\alpha)\leq \kappa_0$.
\end{itemize}
\end{proposition}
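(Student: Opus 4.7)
The plan is to treat the two parts separately.

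For (i), I will use the fundamental theorem of calculus in the regularization parameter. Writing
\begin{align*}
e_N^{it(-\Delta)^\alpha}f(x)=e_1^{it(-\Delta)^\alpha}f(x)+\int_1^N\partial_M e_M^{it(-\Delta)^\alpha}f(x)\,dM,
\end{align*}
differentiation of $\psi(|\xi|/M)=e^{-|\xi|^2/M^2}$ in $M$ produces the factor $2|\xi|^2/M^3$, so $\partial_M e_M^{it(-\Delta)^\alpha}f(x)=M^{-1}T_Mf(x,t)$, where
\begin{align*}
T_Mf(x,t)=(2\pi)^{-n}\int_{\mathbb R^n}\varphi(\xi/M)\,e^{i(x\cdot\xi+t|\xi|^{2\alpha})}\hat f(\xi)\,d\xi
\end{align*}
with the Schwartz bump $\varphi(\eta)=2|\eta|^2 e^{-|\eta|^2}$ localized at $|\eta|\sim 1$. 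Consequently,
\begin{align*}
\sup_{N\ge 1}|e_N^{it(-\Delta)^\alpha}f(x)|\lesssim |e_1^{it(-\Delta)^\alpha}f(x)|+\int_1^\infty M^{-1}|T_Mf(x,t)|\,dM.
\end{align*}

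The main analytic step is the Frostman-type bound
\begin{align*}
\|T_Mf\|_{L^2(\mu)}^2\lesssim C_\kappa(\mu)\,M^{n-\kappa}\,\|P_Mf\|_{L^2(\mathbb R^n)}^2,
\end{align*}
uniform in $t$, where $P_M$ is a Littlewood--Paley projector at frequency $M$. This is an uncertainty-principle argument: $T_Mf$ has Fourier support in $|\xi|\sim M$ (undisturbed by the unimodular factor $e^{it|\xi|^{2\alpha}}$), so $|T_Mf|^2$ is essentially constant on balls of radius $M^{-1}$, and discretizing $\mathbb B^n$ by such balls together with the Frostman hypothesis gives the claim. Combining $\|T_Mf\|_{L^1(\mu)}\le\|T_Mf\|_{L^2(\mu)}$ (since $\mu$ is a probability measure) with Cauchy--Schwarz in $M$ yields
\begin{align*}
\int_1^\infty M^{-1}\|T_Mf\|_{L^1(\mu)}\,dM\lesssim\sqrt{C_\kappa(\mu)}\,\biggl(\int_1^\infty M^{n-\kappa-2s-1}\,dM\biggr)^{1/2}\|f\|_{H^s(\mathbb R^n)},
\end{align*}
where the auxiliary integral converges precisely when $\kappa>n-2s$, which is ensured by $\kappa>\kappa_0\ge n-2s$. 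A direct $L^2(\mu)$ bound (the same Frostman step with $M\sim 1$) handles the $e_1^{it(-\Delta)^\alpha}f$ base term.

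For (ii), I argue by contradiction: suppose that for some $f\in H^s(\mathbb R^n)$ the divergence set $D$ has $\dim_H D>\kappa_0$. By writing $D$ as a countable union over a cover of $\mathbb R^n$ by unit balls and translating/dilating, I may assume $D\subset\mathbb B^n$. Frostman's lemma then supplies $\kappa$ with $\kappa_0<\kappa\le\dim_H D$ and $\mu\in M^\kappa(\mathbb B^n)$ with $\mu(D)>0$. For $\epsilon>0$, pick a Schwartz $g$ with $\|f-g\|_{H^s}<\epsilon$. Since $\lim_{t\to 0}e^{it(-\Delta)^\alpha}g(x)=g(x)$ pointwise, on $D$
\begin{align*}
\limsup_{t\to 0}|e^{it(-\Delta)^\alpha}f(x)-f(x)|\le\sup_{0<t<1}|e^{it(-\Delta)^\alpha}(f-g)(x)|+|f(x)-g(x)|.
\end{align*}
Chebyshev together with \eqref{eq:1} bounds the $\mu$-measure of $\{\sup_{0<t<1}|e^{it(-\Delta)^\alpha}(f-g)|>\delta/2\}$ by $\delta^{-1}\epsilon\sqrt{C_\kappa(\mu)}$. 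The companion bound $\|h\|_{L^1(\mu)}\lesssim\sqrt{C_\kappa(\mu)}\|h\|_{H^s}$, obtained by exactly the same Frostman-plus-dyadic-decomposition reasoning as in (i) (or by specializing (i) to $t=0$ and invoking Fatou along a subsequence of $e_N^{0}(f-g)$), applied to $h=f-g$ controls $\mu\{|f-g|>\delta/2\}$ similarly. Letting $\epsilon\to 0$ and then $\delta\to 0$ forces $\mu(D)=0$, contradicting the construction.

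The main technical obstacle is the Frostman-type $L^2(\mu)$ estimate for $T_Mf$; all the analytic content of (i) is concentrated there. The subtlety is that the bound must be uniform in $t$, so I rely only on $|e^{it|\xi|^{2\alpha}}|=1$ rather than any dispersive decay, and on the fact that a unimodular Fourier multiplier preserves the frequency support of $T_Mf$. Once that estimate is in hand, both parts reduce to the dyadic/Cauchy--Schwarz bookkeeping dictated by the precise threshold $\kappa>n-2s$.
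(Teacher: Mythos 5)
Your proposal is correct in spirit and reaches the same conclusion, but it reorganizes the argument in ways worth flagging. For part~(i), you supply a full proof — the fundamental-theorem-of-calculus decomposition in the regularization parameter $N$, the Frostman-type $L^2(\mu)$ estimate $\|T_Mf\|_{L^2(\mu)}^2\lesssim C_\kappa(\mu)M^{n-\kappa}\|P_Mf\|_{L^2}^2$ for band-limited functions, and the Cauchy--Schwarz bookkeeping that converges exactly when $\kappa>n-2s$ — whereas the paper simply cites this as ``the elementary stopping-time-maximal inequality [BBCR, (4)]''; your reconstruction is essentially the argument of Barcel\'o--Bennett--Carbery--Rogers and is sound (the one technicality you wave past is that $\varphi(\eta)=2|\eta|^2e^{-|\eta|^2}$ is not compactly supported, so the Frostman bound needs a dyadic tail estimate rather than a single band-limitedness appeal, but this is routine). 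For part~(ii), you argue by contradiction via Frostman's lemma, which is logically equivalent to the paper's direct route. The more substantive departure is that you bypass the paper's Step~1, which upgrades the single-sup hypothesis \eqref{eq:1} to the \emph{double}-sup bound \eqref{1.3c1} over both $t$ and $N$; the paper does this via the explicit computation with $\psi'$ and then phrases the whole divergence-set estimate \eqref{1.3d2}--\eqref{1.3d3} entirely through the pointwise-defined mollifications $e_N^{it(-\Delta)^\alpha}$. Your version instead applies \eqref{eq:1} directly to $f-g$ and invokes a companion trace bound $\|h\|_{L^1(\mu)}\lesssim\sqrt{C_\kappa(\mu)}\|h\|_{H^s}$. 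That works, but it glosses over the fact that $\sup_{0<t<1}|e^{it(-\Delta)^\alpha}(f-g)(x)|$ and $|f(x)-g(x)|$ are only $\mu$-a.e.\ defined after choosing a representative, which is precisely what the paper's $e_N$-mollification and double-sup inequality are engineered to handle cleanly; you do gesture at the fix (``specializing (i) to $t=0$ and invoking Fatou''), so the gap is one of bookkeeping rather than of ideas. Net: you prove more than the paper in (i), and in (ii) you obtain the same conclusion by the same estimates, modulo the representative-choice subtlety that the paper treats more carefully.
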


\begin{proof} (i) This \eqref{1.3b1} is the elementary stopping-time-maximal inequality \cite[(4)]{BBCR}.
	
	(ii) The argument is split into two steps.

\begin{itemize}
\item[Step 1.]
We show the following inequality:
\begin{align}\label{1.3c1}
\left\|\sup_{0<t<1}\sup_{N\geq1}\left|e_N^{it(-\Delta)^\alpha}f\right|\right\|_{L^1(\mu)}\lesssim\sqrt{C_\kappa(\mu)}\|f\|_{H^s(\mathbb{R}^n)}.
\end{align}
In a similar way to verify \cite[Proposition 3.2]{BBCR}, we achieve
$$\sup_{N\geq1}\left|e_N^{it(-\Delta)^\alpha}f(x)\right|
\leq\left|e_1^{it(-\Delta)^\alpha}f(x)\right|+\int_1^\infty\left|\frac{d}{dN}e_N^{it(-\Delta)^\alpha}f(x)\right|dN.$$

It is not hard to obtain \eqref{1.3c1} if we have the following two inequalities:
\begin{align}\label{1.3c2}
\left\|\sup_{0<t<1}\left|e_1^{it(-\Delta)^\alpha}f\right|\right\|_{L^1(\mu)}\lesssim\sqrt{C_\kappa(\mu)}\|f\|_{H^s(\mathbb{R}^n)}
\end{align}
and
\begin{align}\label{1.3c3}
\int_1^\infty\left\|\sup_{0<t<1}\left|e^{it(-\Delta)^\alpha}\left(\frac{(\cdot)}{N^2}\psi ' \left(\frac{(\cdot)}{N}\right)\hat{f}(\cdot)\right)^{\vee}\right|\right\|_{L^1(\mu)}dN
\lesssim\sqrt{C_\kappa(\mu)}\|f\|_{H^s(\mathbb{R}^n)}.
\end{align}

\eqref{1.3c2} follows from the fact that \eqref{eq:1} implies
\begin{align*}
\left\|\sup_{0<t<1}\left|e_1^{it(-\Delta)^\alpha}f\right|\right\|_{L^1(\mu)}
&=\left\|\sup_{0<t<1}\left|\int_{\mathbb{R}^n}e^{i(x\cdot\xi+t|\xi|^{2\alpha})}\psi(\xi)\hat{f}(\xi)d\xi\right|\right\|_{L^1(\mu)}\\
&=\left\|\sup_{0<t<1}\left|e^{it(-\Delta)^\alpha}\left(\psi(\cdot)\hat{f}(\cdot)\right)^{\vee}\right|\right\|_{L^1(\mu)}\nonumber\\
&\lesssim\sqrt{C_\kappa(\mu)}\left\|\left(\psi(\cdot)\hat{f}(\cdot)\right)^{\vee}\right\|_{H^s(\mathbb{R}^n)}\nonumber\\
&\lesssim\sqrt{C_\kappa(\mu)}\|f\|_{H^s(\mathbb{R}^n)}.\nonumber
\end{align*}

To prove \eqref{1.3c3}, we utilize $$\psi'\left(\frac{|\xi|}{N}\right)\lesssim\sum_{k\geq0}2^{-2nk}\chi_{B^n(0,2^kN)}(\xi)
$$
to calculate
\begin{align}\label{1.3c5}
\left\|\left(\frac{\psi'\left(\frac{(\cdot)}{N}\right)(\cdot)\hat{f}(\cdot)}{N^2}\right)^{\vee}\right\|_{H^s(\mathbb{R}^n)}
&\lesssim\left\|\frac{(1+|\cdot|^2)^\frac{s}{2}\sum_{k\geq0}2^{-2nk}\chi_{B^n(0,2^kN)}(\cdot)(\cdot)\hat{f}(\cdot)}{N^2}\right\|_{L^2(\mathbb{R}^n)}\\
&\leq\sum_{k\geq0}\frac{2^{-2nk}}{N^{1+\epsilon}}
\left\|\frac{(1+|\cdot|^2)^\frac{s}{2} \chi_{B^n(0,2^kN)}(\cdot)(\cdot)\hat{f}(\cdot)}{N^{1-\epsilon}}\right\|_{L^2(\mathbb{R}^n)}\nonumber\\
&\lesssim\frac{1}{N^{1+\epsilon}}\left\|f\right\|_{H^{s+\epsilon}(\mathbb{R}^n)}.\nonumber
\end{align}
By \eqref{eq:1} and \eqref{1.3c5}, we obtain
\begin{align*}
\int_1^\infty\left\|\sup_{0<t<1}\left|e^{it(-\Delta)^\alpha}\left(\frac{\psi' \left(\frac{(\cdot)}{N}\right)(\cdot)\hat{f}(\cdot)}{N^2}\right)^{\vee}\right|\right\|_{L^1(\mu)}dN
&\lesssim\int_1^\infty \sqrt{C_\kappa(\mu)} \left\|\left(\frac{\psi'\left(\frac{(\cdot)}{N}\right)(\cdot)\hat{f}(\cdot)}{N^2}\right)^{\vee}\right\|_{H^s(\mathbb{R}^n)}dN\\
&\lesssim\int_1^\infty \sqrt{C_\kappa(\mu)} \frac{1}{N^{1+\epsilon}}\left\|f\right\|_{H^{s+\epsilon}(\mathbb{R}^n)}dN\nonumber\\
&\lesssim\sqrt{C_\kappa(\mu)}\|f\|_{H^{s+\epsilon}(\mathbb{R}^n)},\nonumber
\end{align*}
thereby reaching \eqref{1.3c3}.
\end{itemize}

\begin{itemize}
\item[Step 2.]
We are about to show:
$$
\mathsf{d}(s,n,\alpha)\leq\kappa_0\ \ \forall\ \ \kappa_0\in[n-2s,\kappa).
$$
By the definition, we have
\begin{align}\label{1.3d2}
\mu\left\{x\in\mathbb{B}^n: \lim_{t\rightarrow0}e^{it(-\Delta)^\alpha}f(x)\neq f(x)\right\}
&=\mu\left\{x\in\mathbb{B}^n: \lim_{t\rightarrow0}\lim_{N\rightarrow\infty}e_N^{it(-\Delta)^\alpha}f(x)\neq \lim_{N\rightarrow\infty}e_N^{i0(-\Delta)^\alpha}f(x)\right\}.
\end{align}
For any $$f\in H^s(\mathbb R^n)\ \ \&\ \ 0<\epsilon\ll 1,$$ there exists
$$g\in \mathcal S(\mathbb{R}^n)\ \ \text{such that}\ \
\|f-g\|_{H^s(\mathbb{R}^n)}<\epsilon.$$
Accordingly, if
$$\mu\in M^\kappa(\mathbb{B}^n)~\&~\kappa>\kappa_0\geq n-2s,$$
then a combination of \eqref{1.3c1} and \eqref{1.3b1} gives
\begin{align}\label{1.3d3}
&\mu\left\{x\in\mathbb{B}^n: \overline{\lim}_{t\rightarrow0}\overline{\lim}_{N\rightarrow\infty}
\left|e_N^{it(-\Delta)^\alpha}f(x)-e_N^{i0(-\Delta)^\alpha}f(x)\right|>\lambda\right\}\\
&\leq\mu\left\{x\in\mathbb{B}^n: \sup_{0<t<1}\sup_{N\geq1}\left|e_N^{it(-\Delta)^\alpha}(f-g)(x)\right|>\frac{\lambda}{3}\right\}\nonumber\\
&\quad+\mu\left\{x\in\mathbb{B}^n: \lim_{t\rightarrow0}\lim_{N\rightarrow\infty}\left|e_N^{it(-\Delta)^\alpha}g(x)-e_N^{i0(-\Delta)^\alpha}g(x)\right|>\frac{\lambda}{3}\right\}\nonumber\\
&\quad+\mu\left\{x\in\mathbb{B}^n: \sup_{N\geq1}\left|e_N^{i0(-\Delta)^\alpha}(g-f)(x)\right|>\frac{\lambda}{3}\right\}\nonumber\\
&\leq\lambda^{-1}\left\|\sup_{0<t<1}\sup_{N\geq1}\left|e_N^{it(-\Delta)^\alpha}(f-g)\right|\right\|_{L^1(\mu)}
+\lambda^{-1}\left\|\sup_{N\geq1}\left|e_N^{i0(-\Delta)^\alpha}(g-f)\right|\right\|_{L^1(\mu)}\nonumber\\
&\lesssim\lambda^{-1}\sqrt{C_\kappa(\mu)}\|f-g\|_{H^s(\mathbb{R}^n)}\nonumber\\
&\lesssim\lambda^{-1}\sqrt{C_\kappa(\mu)}\epsilon.\nonumber
\end{align}
Upon letting $\epsilon\rightarrow0$ firstly and $\lambda\rightarrow0$ secondly, we have
$$\mu\left\{x\in\mathbb{B}^n: \lim_{t\rightarrow0}  e^{it(-\Delta)^\alpha}f(x)\neq f(x) \right\}=0.$$

If $\mathbb{H}^\kappa$ denotes the $\kappa$-dimensional Hausdorff measure which is of translation invariance and countable additivity, then
Frostman's lemma is used to derive
$$\mathbb{H}^\kappa\left\{x\in\mathbb{B}^n: \lim_{t\rightarrow0}e^{it(-\Delta)^\alpha}f(x)\neq f(x)\right\}=0,$$
and hence
$$\mathsf{d}(s,n,\alpha)=\sup_{f\in H^s(\mathbb{R}^n)}\dim_H\left\{x\in\mathbb{R}^n: \lim_{t\rightarrow0}e^{it(-\Delta)^\alpha}f(x)\neq f(x)\right\}\leq\kappa_0.
$$

\end{itemize}

\end{proof}

\subsection{Proof of Theorem \ref{theorem 1.1}}\label{s22} We begin with a statement of the following key result whose proof will be presented in \S \ref{s3} due to its high nontriviality.

\begin{theorem}\label{theorem 1.5}
	If
	\begin{equation*}
	\left\{
	\begin{aligned}
	&n\geq2;\\
%	&\alpha>2^{-1};\\
	&0<\kappa\le n;\\
	&C_\kappa(\mu)<\infty;\\
	&R\geq1;\\
	&d\mu_R(x)=R^\kappa d\mu\left(\frac{x}{R}\right);\\
	&f\in H^s(\mathbb R^n);\\
	&\text{supp}\hat{f}\subset A(1)=\{\xi\in\mathbb{R}^n:|\xi|\thicksim1\},\\
	\end{aligned}
	\right.
	\end{equation*}
	then
	\begin{align}\label{1.5a}
	\left\|\sup_{0<t<R}\left|e^{it(-\Delta)^\alpha}f\right|\right\|_{L^2(B^n(0,R);\mu_R)}
	\lessapprox R^{\frac{\kappa}{2(n+1)}}\|f\|_{L^2(\mathbb{R}^n)}.
	\end{align}
\end{theorem}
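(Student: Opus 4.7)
The plan is to reduce Theorem~\ref{theorem 1.5} to the (forthcoming) fractal spacetime $L^2$ estimate of Corollary~\ref{corollary 2.2} via the classical two-move dequantisation: linearise the supremum in $t$, then lift the base measure $\mu_R$ on $\rn$ to a graph measure on $\rn\times\RR$. First, because $\widehat f$ is supported in $A(1)$, the function $u(x,t):=e^{it(-\Delta)^\alpha}f(x)$ from \eqref{Boussinesq} is jointly smooth in $(x,t)$, so a routine measurable-selection argument produces a Borel map $t:B^n(0,R)\to(0,R)$ with $\sup_{0<\tau<R}|u(x,\tau)|\leq 2|u(x,t(x))|$. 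Consequently, squaring the left-hand side of \eqref{1.5a} bounds the whole expression by $\int_{B^n(0,R)}|u(x,t(x))|^{2}\,d\mu_R(x)$.

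Next I would push $\mu_R$ forward by the graph map $x\mapsto(x,t(x))$ to obtain a Borel measure $\nu_R$ on $\mathbb{R}^{n+1}$, automatically supported in the cylinder $B^n(0,R)\times(0,R)\subset B^{n+1}(0,\sqrt{2}\,R)$. The rescaling identity $\mu_R(B^n(y,r))=R^\kappa\mu(B^n(y/R,r/R))\leq C_\kappa(\mu)r^\kappa$ propagates to $\nu_R$ through the spatial-projection inclusion, giving the $(n{+}1)$-dimensional Frostman bound
\[
\nu_R\bigl(B^{n+1}(z,r)\bigr)\;\leq\;\mu_R\bigl(B^n(\pi z,r)\bigr)\;\leq\;C_\kappa(\mu)\,r^\kappa,
\]
where $\pi$ is the spatial projection. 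In particular, the integral from the previous paragraph equals $\|u\|_{L^2(\nu_R)}^{2}$, and Theorem~\ref{theorem 1.5} reduces to the fractal spacetime bound $\|u\|_{L^2(\nu_R)}\lessapprox R^{\kappa/(2(n+1))}\|f\|_{L^2(\rn)}$ for a generic $\kappa$-Frostman measure on $B^{n+1}(0,\sqrt{2}\,R)$. This is precisely the content of Corollary~\ref{corollary 2.2}: on each unit spacetime cube $Q$ one uses Hölder to obtain $\int_{Q}|u|^{2}\,d\nu_R\lesssim \|u\|_{L^{2(n+1)/(n-1)}(Q)}^{2}\,\nu_R(Q)^{2/(n+1)}$, then invokes $\nu_R(Q)\leq C_\kappa(\mu)$ and sums over $Q$ matched against Theorem~\ref{theorem 2.1}, the exponent $\kappa/(2(n+1))$ emerging from the balance between the total mass $\nu_R(B^{n+1}(0,\sqrt{2}\,R))\lesssim R^\kappa$ and the $L^{2(n+1)/(n-1)}$-budget.

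The reduction above is essentially formal once Corollary~\ref{corollary 2.2} is available; the genuine obstacle is Theorem~\ref{theorem 2.1} itself. Its proof in \S\ref{s4} runs a Bourgain--Guth broad--narrow dichotomy in $\mathbb{R}^{n+1}$: in the broad regime, where the contributing wave-packet directions populate a sufficiently rich cone, one invokes the $k$-linear refined Strichartz inequalities of Theorems~\ref{theorem 3.1}--\ref{theorem 3.4}; in the narrow regime, one iterates using the narrow $L^{2(n+1)/(n-1)}$-bound of Lemma~\ref{lemma 3.6}. The hypothesis $\alpha>\tfrac12$ enters precisely to keep the Hessian of the phase $|\xi|^{2\alpha}$ non-degenerate on the annulus $A(1)$, which is the curvature input that powers both the broad refined Strichartz pieces and the narrow decoupling-type input; absent it, the wave-packet geometry on which the broad--narrow machinery rests collapses, which is why the case $\alpha\in(0,\tfrac12]$ cannot be reached by this route.
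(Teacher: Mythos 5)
Your opening moves — linearising the supremum by a measurable selection $t(x)$, pushing $\mu_R$ forward to a graph measure $\nu_R$ on $\RR^{n+1}$, and checking that $\nu_R$ inherits the $\kappa$-Frostman bound — are sound and do match the spirit of the paper's reduction. But the claimed hand-off to Corollary~\ref{corollary 2.2} has a genuine gap, and the per-cube Hölder step is also not correct as written.

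Concretely: Corollary~\ref{corollary 2.2} is a Lebesgue-measure estimate on a union $X=\cup_k B_k$ of unit cubes, with the loss governed by the counting parameter
\[
\gamma=\max_{B^{n+1}(x',r)\subset B^{n+1}(0,R),\;r\ge 1}\frac{\#\{B_k\subset B^{n+1}(x',r)\}}{r^\kappa}.
\]
The Frostman bound $\nu_R(B^{n+1}(z,r))\lesssim r^\kappa$ controls \emph{mass} in a ball, not the \emph{number} of unit cubes meeting the support of $\nu_R$: cubes with tiny $\nu_R$-mass can pack a ball of radius $r$ far beyond $r^\kappa$ without violating Frostman, so the union of all such cubes can have $\gamma$ as large as $\#\{Q\}$, which destroys the estimate. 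This is exactly why the paper's proof inserts a dyadic pigeonholing on the density $v_m\sim v$ (see \eqref{1.5b2}--\eqref{1.5b3}): restricting to cubes of comparable mass $v$ forces $\gamma\lesssim R^{O(\epsilon)}v^{-1}$, and the crucial cancellation $v\cdot\gamma^{2/(n+1)}\lesssim v^{(n-1)/(n+1)}\le 1$ is what delivers the clean exponent $R^{\kappa/(n+1)}$. Your sketch invokes only the uniform bound $\nu_R(Q)\le C_\kappa(\mu)$, which does not produce this cancellation; absent the $v$-pigeonholing, the reduction to Corollary~\ref{corollary 2.2} does not close.

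Separately, your displayed Hölder inequality is off: Hölder on $\int_Q|u|^2\,d\nu_R$ with conjugate pair $\bigl(\tfrac{n+1}{n-1},\tfrac{n+1}{2}\bigr)$ gives $\|u\|_{L^{p}(Q;\nu_R)}^2\,\nu_R(Q)^{2/(n+1)}$, with the $L^p$-norm taken against $\nu_R$, not Lebesgue. If instead you invoke local constancy (legitimate here since $\widehat u$ is compactly supported) to pass to $\|u\|_{L^\infty(Q)}$ and then to $\|u\|_{L^p(2Q;\,dx)}$, the correct per-cube bound is $\int_Q|u|^2\,d\nu_R\lesssim\|u\|_{L^p(2Q)}^2\,\nu_R(Q)$, i.e.\ with $\nu_R(Q)$ to the first power. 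Summing this and Höldering the resulting series gives $R^{2\kappa/(n+1)}\|u\|_{L^p}^2$, a full factor $R^{\kappa/(n+1)}$ worse than the target — again because no pigeonholing has equalised the cube densities. The paper's proof also handles the discretisation carefully via convolution with a Schwartz bump $\psi_1$ and passage to the lattice-cube centre $\tilde m(x,t)$, which silently absorbs these local-constancy losses in a controlled way; your sketch elides this.
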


Consequently, we have the following assertion.

\begin{corollary}\label{theorem 1.4}
If
\begin{equation*}
\left\{
\begin{aligned}
 &n\geq2;\\
% &\alpha>2^{-1};\\
 &0<\kappa\le n;\\
 &s>\frac{\kappa}{2(n+1)}+\frac{n-\kappa}{2};\\
 &C_\kappa(\mu)<\infty;\\
&f\in H^s(\mathbb{R}^n),
\end{aligned}
\right.
\end{equation*}
then
\begin{align}\label{1.4a}
\left\|\sup_{0<t<1}\left|e^{it(-\Delta)^\alpha}f\right|\right\|_{L^2(\mathbb B^n;\mu)}\leq \sqrt{C_\kappa(\mu)}\|f\|_{H^s(\mathbb{R}^n)}.
\end{align}
\end{corollary}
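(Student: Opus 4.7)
The plan is a Littlewood-Paley decomposition of $f$, followed by a dyadic rescaling of each piece to frequency one, where Theorem \ref{theorem 1.5} applies directly.

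First I would choose a smooth Littlewood-Paley resolution $\{\phi_k\}_{k\ge 0}$ with $\phi_k$ supported in $|\xi|\sim 2^k$ for $k\ge 1$ and in $|\xi|\le 2$ for $k=0$, and set $\widehat{f_k}=\phi_k\widehat f$, so $f=\sum_{k\ge 0}f_k$. Minkowski's inequality reduces the task to a uniform-in-$k$ control,
$$
\Bigl\|\sup_{0<t<1}|e^{it(-\Delta)^\alpha}f|\Bigr\|_{L^2(\mathbb B^n;\mu)}\le\sum_{k\ge 0}\Bigl\|\sup_{0<t<1}|e^{it(-\Delta)^\alpha}f_k|\Bigr\|_{L^2(\mathbb B^n;\mu)}.
$$

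Next, for each $k\ge 0$ I would set $g_k(y)=f_k(2^{-k}y)$; then $\widehat{g_k}$ is supported in $\{|\eta|\sim 1\}$, $\|g_k\|_{L^2}=2^{nk/2}\|f_k\|_{L^2}$, and the substitution $\xi=2^k\eta$ in the Fourier integral yields the scaling identity
$$
e^{it(-\Delta)^\alpha}f_k(x)=e^{i\tau(-\Delta)^\alpha}g_k(y),\qquad y=2^kx,\ \tau=2^{2\alpha k}t.
$$
Under $x\mapsto 2^kx$ the measure $\mu$ pushes forward to a measure $\tilde\mu$ supported in $B^n(0,2^k)$; I would rewrite it in the form $\tilde\mu=(\mu')_R$ required by Theorem \ref{theorem 1.5} at a scale $R$ covering the rescaled time window, with $\mu'$ a Frostman base measure on $\mathbb B^n$. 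A direct scaling check delivers $C_\kappa(\mu')\lesssim 2^{-k\kappa}C_\kappa(\mu)$. Applying Theorem \ref{theorem 1.5} to $g_k$ and unravelling the three scalings then gives
$$
\Bigl\|\sup_{0<t<1}|e^{it(-\Delta)^\alpha}f_k|\Bigr\|_{L^2(\mathbb B^n;\mu)}\lessapprox 2^{k\left[\frac{\kappa}{2(n+1)}+\frac{n-\kappa}{2}\right]}\sqrt{C_\kappa(\mu)}\,\|f_k\|_{L^2}.
$$

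Finally, setting $b=\frac{\kappa}{2(n+1)}+\frac{n-\kappa}{2}$ and choosing $\epsilon>0$ with $s-b>\epsilon$ to absorb the $R^\epsilon$ loss in $\lessapprox$, Cauchy--Schwarz in $k$ and the Littlewood-Paley characterization $\|f\|_{H^s(\mathbb R^n)}^2\sim\sum_k 2^{2ks}\|f_k\|_{L^2}^2$ yield
$$
\sum_{k\ge 0}2^{kb}\|f_k\|_{L^2}\le\Bigl(\sum_{k\ge 0}2^{2k(b-s)}\Bigr)^{\!1/2}\Bigl(\sum_{k\ge 0}2^{2ks}\|f_k\|_{L^2}^2\Bigr)^{\!1/2}\lesssim\|f\|_{H^s(\mathbb R^n)},
$$
which combined with the dyadic bound closes \eqref{1.4a}.

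The main obstacle lies in the bookkeeping of the second paragraph: the scale $R$ and the rescaled base measure $\mu'$ must be chosen so that the three contributions $R^{\kappa/(2(n+1))}$, $\sqrt{C_\kappa(\mu')}\sim 2^{-k\kappa/2}\sqrt{C_\kappa(\mu)}$, and $\|g_k\|_{L^2}=2^{nk/2}\|f_k\|_{L^2}$ combine into exactly the exponent $\tfrac{\kappa}{2(n+1)}+\tfrac{n-\kappa}{2}$ dictated by the hypothesis on $s$; any mismatch between the spatial and temporal windows produced by the rescaling would force a strictly stronger assumption on $s$, so extracting the full $2^{-k\kappa/2}$ Frostman gain from the pushforward is the critical step.
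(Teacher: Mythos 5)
Your plan — Littlewood--Paley decomposition, parabolic rescaling to frequency one, Theorem~\ref{theorem 1.5}, and a geometric sum — is the same skeleton the paper uses (they rescale first and decompose second, but that is cosmetic). However, there is a genuine gap in the rescaling step, and it is not the one you flag at the end.

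Writing $\tau=2^{2\alpha k}t$ and $y=2^kx$ gives two \emph{different} dilation factors once $\alpha\neq\tfrac12$: the rescaled time variable runs over $0<\tau<2^{2\alpha k}$, while the pushforward $\tilde\mu$ and the rescaled $L^2$-normalization scale like $2^k$. You say you would apply Theorem~\ref{theorem 1.5} ``at a scale $R$ covering the rescaled time window,'' which forces $R=2^{2\alpha k}$; but then the factor the theorem delivers is $R^{\kappa/(2(n+1))}=2^{2\alpha k\kappa/(2(n+1))}$, and combining it with $\sqrt{C_\kappa(\mu')}\sim 2^{-k\kappa/2}\sqrt{C_\kappa(\mu)}$ and $\|g_k\|_{L^2}=2^{nk/2}\|f_k\|_{L^2}$ produces the exponent $\alpha\kappa/(n+1)+(n-\kappa)/2$, which for $\alpha>\tfrac12$ is \emph{strictly larger} than the claimed $\tfrac{\kappa}{2(n+1)}+\tfrac{n-\kappa}{2}$. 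Your displayed bound for $\|\sup_{0<t<1}|e^{it(-\Delta)^\alpha}f_k|\|_{L^2(\mathbb B^n;\mu)}$ does not follow from these three factors. Conversely, taking $R=2^k$ does produce the right exponent, but then $\tau$ ranges up to $R^{2\alpha}>R$ and Theorem~\ref{theorem 1.5} as stated only covers $0<\tau<R$.

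The missing ingredient is precisely inequality~\eqref{1.4b1}: the maximal estimate over the longer time window $0<t<R^{2\alpha}$ at spatial scale $R$, with the \emph{same} gain $R^{\kappa/(2(n+1))}$. The paper does not derive this from Theorem~\ref{theorem 1.5} by a trivial change of variables; it cites Cho--Ko~\cite{CK} (this is a temporal-localization argument exploiting that, for frequencies $|\xi|\sim1$ and $\alpha>\tfrac12$, the wave packets leave the ball $B^n(0,R)$ in time $\sim R$). Your closing paragraph correctly senses that ``any mismatch between the spatial and temporal windows would force a strictly stronger assumption on $s$,'' but you misidentify the remedy: the Frostman gain $2^{-k\kappa/2}$ is a routine scaling computation and cannot absorb the discrepancy, whereas the passage from $0<t<R$ to $0<t<R^{2\alpha}$ is the genuinely nontrivial step and needs to be supplied (by~\eqref{1.4b1} or an equivalent argument). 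Once you have that, the rest of your argument — the dyadic rescaling of the measure, the Minkowski/Cauchy--Schwarz summation in $k$ against $\|f\|_{H^s}$ — is sound and matches the paper.
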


\begin{proof}
	Upon using Theorem \ref{theorem 1.5} and its notations as well as \cite{CK} (cf. \cite{CLV,L,LR,MYZ}), we get
	\begin{align}\label{1.4b1}
	\left\|\sup_{0<t<R^{2\alpha}}\left|e^{it(-\Delta)^\alpha}f\right|\right\|_{L^2(B^n(0,R);\mu_R(x))}
	\lessapprox R^{\frac{\kappa}{2(n+1)}}\|f\|_{L^2(\mathbb{R}^n)}.
	\end{align}
	
	Next, we use parabolic rescaling. More precisely, if
	
	\begin{equation*}
	\left\{
	\begin{aligned}
	&\xi=R^{-1}\eta;\\
	&x=RX; \\
	&t=R^{2\alpha} T;\\
	&f_R(x)=f(Rx);\\
	&\text{supp} \widehat{f_R}\subset A(R)=\{\xi\in\mathbb{R}^n:|\xi|\thicksim R\},
	\end{aligned}
	\right.
	\end{equation*}
	then
	\begin{align*}
	e^{it(-\Delta)^\alpha}f(x)
	&=\int_{\mathbb{R}^n}e^{i(x\cdot\xi+t|\xi|^{2\alpha})}\hat{f}(\xi)d\xi\\
	&=\int_{\mathbb{R}^n}e^{i(R^{-1}x\cdot\eta+tR^{-2\alpha}|\eta|^{2\alpha} )}\widehat{f(R\cdot)}(\eta)d\eta\\
	&=\int_{\mathbb{R}^n}e^{i(X\cdot\eta+T|\eta|^{2\alpha})}\widehat{f_R}(\eta)d\eta\\
	&=e^{iT(-\Delta)^{\alpha}}f_R(X),
	\end{align*}
	and hence
	$$
	\begin{cases}
	\left\|\sup_{0<t<R^{2\alpha}}\left|e^{it(-\Delta)^\alpha}f\right|\right\|_{L^2(B^n(0,R);\mu_R(x))}=R^{\frac{\kappa}{2}}\left\|\sup_{0<T<1}\left|e^{iT(-\Delta)^{\alpha}}f_R\right|\right\|_{L^2(\mathbb B^n;\mu(X))};\\
	\|f_R\|_{L^2(\mathbb{R}^n)}=\left(\int_{\mathbb{R}^n}|f_R(x)|^2dx\right)^{\frac{1}{2}}=R^{-\frac{n}{2}}\|f\|_{L^2(\mathbb{R}^n)};\\
	R^{\frac{\kappa}{2}}\left\|\sup_{0<T<1}\left|e^{iT(-\Delta)^{\alpha}}f_R\right|\right\|_{L^2(\mathbb B^n;d\mu(X))}
	\lessapprox  R^{\frac{\kappa}{2(n+1)}}R^{\frac{n}{2}}\|f_R\|_{L^2(\mathbb{R}^n)}.
	\end{cases}
	$$
	Consequently, if
	$T=t\ \&\ X=x$, then
	\begin{align}\label{1.4b2}
	\left\|\sup_{0<t<1}\left|e^{it(-\Delta)^\alpha}f_R\right|\right\|_{L^2(\mathbb B^n;d\mu(x))}
	\lessapprox  R^{\frac{\kappa}{2(n+1)}+\frac{n-\kappa}{2}}\|f_R\|_{L^2(\mathbb{R}^n)},
	\end{align}
	and hence Littlewood-Paley's decomposition yields
	
	\begin{equation*}
	\left\{
	\begin{aligned}
	&f=f_0+\sum_{k\geq1}f_k;\\
	&\text{supp} \widehat{f_0}\subset A(1); \\
	&\text{supp} \widehat{f_k}\subset A(2^k)=\{\xi\in\mathbb{R}^n:|\xi|\thicksim 2^k\}.\\
	\end{aligned}
	\right.
	\end{equation*}
	Finally, by Minkowski's inequality and \eqref{1.4b2} as well as $$s>\frac{\kappa}{2(n+1)}+\frac{n-\kappa}{2},$$
	we arrive at
	\begin{align*}
	\left\|\sup_{0<t<1}\left|e^{it(-\Delta)^\alpha}f\right|\right\|_{L^2(\mathbb B^n;\mu)}
	&\leq\left\|\sup_{0<t<1}\left|e^{it(-\Delta)^\alpha}f_0\right|\right\|_{L^2(\mathbb B^n;\mu)}
	+\sum_{k\geq1}\left\|\sup_{0<t<1}\left|e^{it(-\Delta)^\alpha}f_k\right|\right\|_{L^2(\mathbb B^n;\mu)}\\
	&\lesssim\|f_0\|_{L^2(\mathbb{R}^n)}
	+\sum_{k\geq1}2^{k\left({\frac{\kappa}{2(n+1)}}+\frac{n-\kappa}{2}\right)}\|f_k\|_{L^2(\mathbb{R}^n)}\\
	&\lesssim\|f\|_{H^s(\mathbb{R}^n)}
	+\sum_{k\geq1}2^{k\left({\frac{\kappa}{2(n+1)}}+\frac{n-\kappa}{2}-s\right)}\|f\|_{H^s(\mathbb{R}^n)}\\
	&\lesssim\|f\|_{H^s(\mathbb{R}^n)}.
	\end{align*}
	
\end{proof}

\begin{proof}[Proof of (Corollary \ref{theorem 1.4}$\Rightarrow$Theorem\ref{theorem 1.1})]  An application of the H\"older inequality and  \eqref{1.4a} in Corollary \ref{theorem 1.4} derives
	
	\begin{align*}\label{1.4a}
	\left\|\sup_{0<t<1}\left|e^{it(-\Delta)^\alpha}f\right|\right\|_{L^1(\mathbb B^n;\mu)}\leq \sqrt{C_\kappa(\mu)}\|f\|_{H^s(\mathbb{R}^n)},
	\end{align*}
	whence \eqref{eq:1} follows up. So,
	Proposition \ref{theorem 1.3} yields
	$$
	\mathsf{d}(s,n,\alpha)\le\kappa_0\in [n-2s,\kappa).
	$$
	
Also, since
$$s>\frac{\kappa}{2(n+1)}+\frac{n-\kappa}{2},$$
we have
$$n\ge\kappa>n+1-\frac{2(n+1)s}{n}.$$
Upon choosing
$$\kappa_0=n+1-\frac{2(n+1)s}{n},
$$
we make a two-fold analysis below:
\begin{itemize}
	\item[$\rhd$]
	On the one hand, we ask for
	$$
	n+1-\frac{2(n+1)s}{n}\geq n-2s\Leftrightarrow s\leq\frac{n}{2}.
	$$
	
	\item[$\rhd$]
	On the other hand, it is nature to request
	$$n+1-\frac{2(n+1)s}{n}<n\Leftrightarrow s>\frac{n}{2(n+1)}.$$
\end{itemize}
Accordingly,
$$
\frac{n}{2(n+1)}<s\leq\frac{n}{2}
$$
is required in the hypothesis of Theorem \ref{theorem 1.1}.
\end{proof}

\section{Theorem \ref{theorem 2.1} $\Rightarrow$ Theorem \ref{theorem 1.5} }\label{s3}

\subsection{Theorem \ref{theorem 2.1} $\Rightarrow$ Corollary \ref{corollary 2.2}}
 We say that a collection of quantities are dyadically constant if all the quantities are in the same interval of the form $(2^j,2^{j+1}]$, where $j$ is an integer. The key ingredient of the proof of Theorem \ref{theorem 1.5} is the following Theorem \ref{theorem 2.1} which will be proved in \S \ref{s4}.
\begin{theorem}\label{theorem 2.1}
Let
\begin{equation*}
\left\{
\begin{aligned}
&(n, R)\in\mathbb N \times[1,\infty);\\
&\text{supp}\hat{f}\subset \mathbb B^n;\\
&p=\frac{2(n+1)}{n-1}.\\
\end{aligned}
\right.
\end{equation*}
Then for any $0<\epsilon<\frac{1}{100}$, there exist constants
$$C_\epsilon>0\ \ \&\ \ 0<\delta=\delta(\epsilon)\ll\epsilon
$$
such that if:
\begin{itemize}
\item[\rm (i)] $Y=\cup_{k=1}^M B_k$ is a union of lattice $K^2$-cubes in $B^{n+1}(0,R)$ and each lattice $R^{\frac{1}{2}}$-cube intersecting $Y$ contains $\thicksim \lambda$ many $K^2$-cubes in $Y$, where $K=R^\delta$;

\item[\rm (ii)] $\|e^{it(-\Delta)^\alpha}f\|_{L^p(B_k)}$ is dyadically a constant in $k=1,2,\cdot\cdot\cdot,M$;

\item[\rm (iii)] $1\leq\kappa\leq n+1$ and $\gamma$ is given by
\begin{align}\label{2.1a1}
\gamma= \max_{ \substack {B^{n+1}(x',r)\subset B^{n+1}(0,R)\\
x'\in \mathbb{R}^{n+1}, r\geq K^2}}\frac{\#\{ B_k:B_k\subset B^{n+1}(x',r)\}}{r^\kappa},
\end{align}
\end{itemize}
then
\begin{align}\label{2.1a2}
\left\|e^{it(-\Delta)^\alpha}f\right\|_{L^p(Y)}
\leq C_\epsilon M^{-\frac{1}{n+1}}\gamma^{\frac{2}{(n+1)(n+2)}}\lambda^{\frac{n}{(n+1)(n+2)}}R^{\frac{\kappa}{(n+1)(n+2)}+\epsilon}\|f\|_{L^2(\mathbb{R}^n)}.
\end{align}
\end{theorem}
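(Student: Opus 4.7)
The plan is to run a broad/narrow induction on the radius $R$, in the spirit of the arguments of Guth and Du--Zhang for the classical Schr\"odinger operator, adapted to the fractional phase $t|\xi|^{2\alpha}$ with $\alpha>\tfrac12$. I will fix $\epsilon\in(0,\tfrac{1}{100})$ and choose $\delta\ll\epsilon$ so that any $R^{O(\delta)}$ losses are absorbed into the final $R^{\epsilon}$; the base case $R\lesssim 1$ is immediate from Plancherel. For the inductive step, decompose $\hat f=\sum_{\tau}\hat f_{\tau}$ into pieces supported on caps $\tau\subset\mathbb B^{n}$ of radius $K^{-1}$, where $K=R^{\delta}$. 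Each $e^{it(-\Delta)^{\alpha}}f_\tau$ concentrates along tubes of dimensions $K\times\cdots\times K\times K^{2}$ oriented in the characteristic direction $(2\alpha|\xi_{\tau}|^{2\alpha-2}\xi_{\tau},1)$; the nondegeneracy of this map, guaranteed by $\alpha>\tfrac12$, is what permits the standard wave-packet machinery.

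For each lattice cube $B_{k}\subset Y$, I call it \emph{$k$-broad} if there exist $k$ caps $\tau_{1},\ldots,\tau_{k}$ whose characteristic directions are quantitatively transverse and whose individual contributions to $\|e^{it(-\Delta)^{\alpha}}f\|_{L^{p}(B_{k})}$ are all significant; otherwise $B_{k}$ is \emph{$k$-narrow}, meaning the nontrivial caps cluster in a $K^{-1}$-neighborhood of some $(k-1)$-dimensional affine subspace of $\mathbb B^{n}$. This splits $Y=Y_{\mathrm{br}}\cup Y_{\mathrm{nr}}$; the value of $k$ will be optimized at the end, but one may think of $k=n+1$ as the canonical choice. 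The two pieces are treated by different mechanisms and then balanced via pigeonholing.

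On $Y_{\mathrm{br}}$ the key inputs are the $k$-broad refined Strichartz estimates, Theorems \ref{theorem 3.1} and \ref{theorem 3.4}. The geometric hypotheses on $Y$ feed into these bounds as follows: hypothesis (i) supplies the density $\lambda$ at the $R^{1/2}$-scale, which enters through the multilinear Kakeya factor; the dyadic-constancy assumption (ii) upgrades an $\ell^{p}$-sum over the $B_{k}$ into an $\ell^{\infty}$-max, producing the gain $M^{-1/(n+1)}$ in place of the naive $M^{0}$; and the Frostman-type parameter $\gamma$ from \eqref{2.1a1} controls concentration at intermediate scales, coupling with the total count to produce the combined factor $\gamma^{2/((n+1)(n+2))}\lambda^{n/((n+1)(n+2))}R^{\kappa/((n+1)(n+2))}$. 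Keeping this bookkeeping straight is what forces the exponents appearing in \eqref{2.1a2}.

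On $Y_{\mathrm{nr}}$ the caps contributing to any given narrow $B_{k}$ lie in a thin slab, so after a parabolic rescaling adapted to $|\xi|^{2\alpha}$ one obtains a lower-dimensional problem at scale $R'=R/K^{2}$, handled by the inductive hypothesis together with the narrow endpoint Lemma \ref{lemma 3.6}. I expect the main obstacle to be carrying this rescaling out cleanly for general $\alpha>\tfrac12$: unlike the Schr\"odinger case $\alpha=1$, the map $\xi\mapsto\nabla|\xi|^{2\alpha}$ is only a homogeneous (not polynomial) diffeomorphism away from the origin, so one must verify that the rescaled family of $K^{2}$-cubes still satisfies a Frostman bound of the form \eqref{2.1a1} with controlled parameters $(\gamma',\lambda',M')$ and that the inductive gain in $R$ strictly dominates the $R^{O(\delta)}$ losses introduced by the cap decomposition. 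Once these are in hand, combining the broad and narrow bounds and invoking $\delta\ll\epsilon$ closes the induction.
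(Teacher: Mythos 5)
Your proposal follows the same skeleton as the paper: $K^{-1}$-cap decomposition, a broad/narrow dichotomy at $K^2$-cubes (broad meaning full $(n+1)$-transversality of significant caps), the multilinear refined Strichartz estimates (Theorems \ref{theorem 3.1}, \ref{theorem 3.4}) on the broad part, and $l^2$-decoupling (Lemma \ref{lemma 3.6}) plus parabolic rescaling plus induction at scale $R/K^2$ on the narrow part, with the $(M,\gamma,\lambda)$ bookkeeping producing the exponents in \eqref{2.1a2}. One over-description: the paper does not run a hierarchy of $k$-broadness with optimization over $k$; it uses only the single binary dichotomy, your ``canonical'' $k=n+1$.

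The genuine gap is the one you flag but do not resolve. After rescaling a $K^{-1}$-cap $\tau$ centered at $\xi_0$ to unit scale via the Hessian-adapted change of variables, the phase becomes
\[
\Phi_{\rho,\xi_0}(\xi)=\rho^{-2}\left(\Phi(\rho H^{-1}\xi+\xi_0)-\Phi(\xi_0)-\rho\,\nabla\Phi(\xi_0)\cdot H^{-1}\xi\right),
\]
which is not $|\xi|^{2\alpha}$. So the inductive hypothesis, if stated only for $e^{it(-\Delta)^\alpha}$, simply cannot be applied to the rescaled problem; this is not a matter of ``verifying'' the rescaled Frostman parameters, but a structural failure of the induction as you set it up. The paper's fix, following Cho--Ko, is to enlarge the admissible phases to the class $\mathcal{NPF}(L,c_0)$ of $C^L(\mathbb{B}^n)$-small perturbations of $|\xi|^2/2$, show that this class is closed under the above rescaling for $\rho$ small enough, prove Theorem \ref{theorem 3.1} and run the entire induction over this class rather than over $|\xi|^{2\alpha}$ alone, and reduce $|\xi|^{2\alpha}$ to it by one initial normalization using positive-definiteness of its Hessian. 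Without this enlargement the narrow-case induction does not close. Once it is in place, the propagation of $(\gamma_1,\lambda_1,M_1)$ through the rescaling that you anticipate is routine, exactly as in the paper's steps 7 through 10.
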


From Theorem \ref{theorem 2.1}, we can get the following $L^2$-restriction estimate.
\begin{corollary}\label{corollary 2.2}
	
	Let
	\begin{equation*}
	(n, R)\in\mathbb N \times[1,\infty)\ \ \&\ \
	\text{supp}\hat{f}\subset \mathbb B^n.
	\end{equation*}
	Then for any $\epsilon>0$ there exists a constant $C_\epsilon>0$ such that if:
	\begin{itemize}
		\item[\rm (i)] $X=\cup_k B_k$ is a union of lattice unit cubes in $B^{n+1}(0,R)$;
		% and each lattice $R^{\frac{1}{2}}$-cube intersecting $X$ contains $\thicksim \lambda$ many unit cubes in $X$;
		
		\item[\rm (ii)] $1\leq\kappa\leq n+1$ and $\gamma$ is given by
		\begin{align}\label{2.2a1}
		\gamma=\max_{\substack{B^{n+1}(x',r)\subset B^{n+1}(0,R)\\x'\in \mathbb{R}^{n+1}, r\geq1}}\frac{\#\{ B_k:B_k\subset B^{n+1}(x',r)\}}{r^\kappa},
		\end{align}
		then
		%\item[\rm (iii)]
		%\begin{align}\label{2.2a2}
		%\left\|e^{it(-\Delta)^\alpha}f\right\|_{L^2(X)}
	%	\leq C_\epsilon \gamma^{\frac{2}{(n+1)(n+2)}}\lambda^{\frac{n}{(n+1)(n+2)}}R^{\frac{\kappa}{(n+1)(n+2)}+\epsilon}\|f\|_{L^2(\mathbb{R}^n)};
		%\end{align}
		%\item[\rm (iv)]
		\begin{align}\label{2.3a2}
		\left\|e^{it(-\Delta)^\alpha}f\right\|_{L^2(X)}
		\leq C_\epsilon \gamma^{\frac{1}{n+1}}R^{\frac{\kappa}{2(n+1)}+\epsilon}\|f\|_{L^2(\mathbb{R}^n)}.
		\end{align}
	\end{itemize}
\end{corollary}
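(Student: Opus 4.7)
The plan is to upgrade Theorem \ref{theorem 2.1}'s $L^p$-estimate ($p=\frac{2(n+1)}{n-1}$) to the $L^2$-estimate \eqref{2.3a2} via H\"older and a two-fold dyadic pigeonholing. First, I would thicken $X$ by replacing each unit cube $B_k$ with the lattice $K^2$-cube containing it, where $K=R^\delta$ and $\delta\ll\epsilon$ is the parameter furnished by Theorem \ref{theorem 2.1}. Writing $\tilde X=\bigcup_\ell \tilde B_\ell$ for the resulting union, the hypothesis \eqref{2.2a1} transfers to a $K^2$-cube density of the same order: for every $B^{n+1}(x',r)\subset B^{n+1}(0,R)$ with $r\geq K^2$,
\begin{align*}
\#\{\tilde B_\ell:\tilde B_\ell\subset B^{n+1}(x',r)\}\leq \#\{B_k:B_k\subset B^{n+1}(x',r+O(K^2))\}\lesssim \gamma r^\kappa.
\end{align*}
Since $\frac12-\frac1p=\frac1{n+1}$ and $|\tilde B_\ell|=K^{2(n+1)}$, H\"older yields $\|e^{it(-\Delta)^\alpha}f\|_{L^2(\tilde B_\ell)}\leq K^2\|e^{it(-\Delta)^\alpha}f\|_{L^p(\tilde B_\ell)}$, and summing gives
\begin{align*}
\|e^{it(-\Delta)^\alpha}f\|_{L^2(X)}^2\leq K^4\sum_\ell\|e^{it(-\Delta)^\alpha}f\|_{L^p(\tilde B_\ell)}^2.
\end{align*}

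Next, I would pigeonhole the index $\ell$ along two dyadic scales: (i) $\|e^{it(-\Delta)^\alpha}f\|_{L^p(\tilde B_\ell)}\sim 2^{-j}E$, with an a priori $E\lesssim K^{n-1}\|f\|_{L^2(\mathbb{R}^n)}$ coming from $\text{supp}\,\hat f\subset \mathbb B^n$ via the $L^\infty$-bound $\|\hat f\|_{L^1(\mathbb{R}^n)}$; (ii) $\lambda_\ell\sim 2^m$, the number of $\tilde B_{\ell'}$ lying in the lattice $R^{1/2}$-cube containing $\tilde B_\ell$. Throwing away negligible tails, only $O((\log R)^2)$ pairs $(j,m)$ survive, and for each the subcollection $Y_{j,m}$ (of $M_{j,m}$ cubes) satisfies the hypotheses of Theorem \ref{theorem 2.1} with a density inherited from $\tilde X$, hence of order $\gamma$. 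Using that the common $L^p(\tilde B_\ell)$-value equals $M_{j,m}^{-1/p}\|e^{it(-\Delta)^\alpha}f\|_{L^p(Y_{j,m})}$, one finds
\begin{align*}
\sum_{\ell\in Y_{j,m}}\|e^{it(-\Delta)^\alpha}f\|_{L^p(\tilde B_\ell)}^2\sim M_{j,m}^{1-\frac2p}\|e^{it(-\Delta)^\alpha}f\|_{L^p(Y_{j,m})}^2=M_{j,m}^{\frac2{n+1}}\|e^{it(-\Delta)^\alpha}f\|_{L^p(Y_{j,m})}^2.
\end{align*}

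Applying Theorem \ref{theorem 2.1} to $Y_{j,m}$ cancels exactly the prefactor $M_{j,m}^{2/(n+1)}$, and the trivial bound $2^m\lesssim \gamma R^{\kappa/2}$ (from \eqref{2.2a1} at scale $r=R^{1/2}$) then collapses the exponents via $\frac4{(n+1)(n+2)}+\frac{2n}{(n+1)(n+2)}=\frac2{n+1}$ and $\frac{n\kappa+2\kappa}{(n+1)(n+2)}=\frac{\kappa}{n+1}$, leaving each dyadic piece dominated by $C_\epsilon^2\gamma^{\frac2{n+1}}R^{\frac{\kappa}{n+1}+2\epsilon}\|f\|_{L^2(\mathbb{R}^n)}^2$. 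Summing over the $O((\log R)^2)$ pairs and absorbing $K^4=R^{4\delta}$, $(\log R)^2$, and $R^{2\epsilon}$ into $R^{\epsilon}$ after harmlessly relabeling $\epsilon$, the square root produces \eqref{2.3a2}.

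The main obstacle is not a conceptual one but the careful bookkeeping: I must verify that the density parameter $\gamma$ feeding into Theorem \ref{theorem 2.1} for the pigeonholed subcollection $Y_{j,m}$ remains controlled by the original $\gamma$ of Corollary \ref{corollary 2.2} (which is immediate since any subcollection inherits the density bound), and that the simultaneous invocation of the two sharp inputs — the $L^p$-normalization $M_{j,m}^{-1/p}\|e^{it(-\Delta)^\alpha}f\|_{L^p(Y_{j,m})}$ and the density bound $\lambda\lesssim \gamma R^{\kappa/2}$ — yields exactly the claimed exponents $\gamma^{1/(n+1)}$ and $R^{\kappa/(2(n+1))}$, with the various losses from H\"older, the pigeonholing, and Theorem \ref{theorem 2.1}'s $R^\epsilon$-slack all fitting inside a single final $R^\epsilon$ factor.
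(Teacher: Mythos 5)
Your approach is essentially the paper's: H\"older at the $K^2$-cube scale to pass from $L^2$ to $L^p$, dyadic pigeonholing to manufacture the hypotheses of Theorem \ref{theorem 2.1}, apply that theorem, then collapse the exponents via $\lambda\lesssim\gamma R^{\kappa/2}$; your exponent arithmetic checks out exactly. One slip must be repaired: in your step (ii) the quantity $\lambda_\ell$ counts cubes of the \emph{whole} thickened collection $\tilde{X}$ inside the ambient $R^{1/2}$-cube, so after restricting to a fixed $L^p$-level $2^{-j}E$ the subcollection $Y_{j,m}$ can have far fewer than $\sim 2^m$ of \emph{its own} $K^2$-cubes in some $R^{1/2}$-cube, and hypothesis (i) of Theorem \ref{theorem 2.1} --- which requires that density to be $\sim\lambda$ within $Y$ itself --- then fails. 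The standard remedy, and exactly what the paper does with its $\tilde{\mathcal{B}}_{\beta,\lambda'}$, is to pigeonhole on the $L^p$-level first and only then measure the $R^{1/2}$-cube density relative to that restricted subcollection before pigeonholing on $m$; after this reordering your argument is sound. The remaining discrepancy with the paper's writeup is cosmetic: they H\"older at the unit-cube scale and only afterward promote to $K^2$-cubes (via the claim $M'\gtrapprox M$), whereas you thicken first; both routes pay a $K^{O(1)}$-tax absorbed into $R^\epsilon$.
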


\begin{proof}
For any $1\leq\lambda\leq R^{O(1)}$, we introduce the notation
$\mathcal{Z}_{\lambda}=\{B_k: B_k\subset X ~\text{such}~\text{that}~\text{any}~R^{\frac12}-~\text{cube}~\text{contains}~\sim \lambda~\text{unit}~\text{cubes}~B_k~\text{in}~\text{it}\}.$
%$\mathcal{B}_{\lambda}\subset X$  such that  any $R^{\frac12}-$ cube contains $\sim \lambda$ many unit cubes in it.
By pigeonholing, we fix  $\lambda$ such that
$$\|e^{it(-\Delta)^\alpha}f\|_{L^2(X)}\lessapprox\|e^{it(-\Delta)^\alpha}f\|_{L^2(\cup_{B_k\in\mathcal{Z}_\lambda} B_k)}.$$
It is easy to see that
$$\lambda\leq\gamma R^{\frac{\kappa}{2}}$$
by taking
$r=R^{\frac{1}{2}}$
in \eqref{2.2a1}.

Next we assume the following inequality holds and we will prove this inequality later.
\begin{align}\label{2.2a2}
\|e^{it(-\Delta)^\alpha}f\|_{L^2(\cup_{B_k \in\mathcal{Z}_\lambda} B_k)}
\lessapprox \gamma^{\frac{2}{(n+1)(n+2)}}\lambda^{\frac{n}{(n+1)(n+2)}}R^{\frac{\kappa}{(n+1)(n+2)}}\|f\|_{L^2(\mathbb{R}^n)},
\end{align}
thereby reaching
$$\left\|e^{it(-\Delta)^\alpha}f\right\|_{L^2(X)}
\leq C_\epsilon \gamma^{\frac{1}{n+1}}R^{\frac{\kappa}{2(n+1)}+\epsilon}\|f\|_{L^2(\mathbb{R}^n)}.$$
Hence it remains to prove \eqref{2.2a2}. Denote $Z=\cup_{B_k \in\mathcal{Z}_\lambda} B_k$.

%Let $X=\cup_k B_k$ be a union of lattice unit cubes which satisfy the assumptions of Corollary \ref{corollary 2.2}.
%Without loss of generality, we assume $\|f\|_{L^2(\mathbb{R}^n)}=1$.
We can sort them into at most $O(\log R)$ many subsets of $Z$ according to the value of $\|e^{it(-\Delta)^\alpha}f\|_{L^p(B_k)}$. In each subset the value of $\|e^{it(-\Delta)^\alpha}f\|_{L^p(B_k)}$ is dyadically a constant. Among the subsets we can find a set $Z'\subset Z$ such that
$$\{\|e^{it(-\Delta)^\alpha}f\|_{L^p(B_k)}: B_k\subset Z'\} \text{ is dyadically a constant}$$ and
$$\|e^{it(-\Delta)^\alpha}f\|_{L^2(Z)}\lessapprox\|e^{it(-\Delta)^\alpha}f\|_{L^2(Z')}.$$
Upon writing
$$M=\#\{B: B~\textrm{is}~\textrm{unit}~\textrm{cube}~\textrm{and}~B\subset Z'\},$$
and using H\"{o}lder's inequality, we have
$$
\left\|e^{it(-\Delta)^\alpha}f\right\|_{L^2(Z)}
\lessapprox\left\|e^{it(-\Delta)^\alpha}f\right\|_{L^2(Z')}
\leq\left\|e^{it(-\Delta)^\alpha}f\right\|_{L^p(Z')}|Z'|^{\frac{1}{2}-\frac{1}{p}}
\leq M^{\frac{1}{n+1}}\left\|e^{it(-\Delta)^\alpha}f\right\|_{L^p(Z')}.
$$
So, in order to prove \eqref{2.2a2}, it suffices to prove

\begin{align}\label{2.2b2}
\left\|e^{it(-\Delta)^\alpha}f\right\|_{L^p(Z')}
\lessapprox M^{-\frac{1}{n+1}}\gamma^{\frac{2}{(n+1)(n+2)}}\lambda^{\frac{n}{(n+1)(n+2)}}R^{\frac{\kappa}{(n+1)(n+2)}}\|f\|_{L^2(\mathbb{R}^n)}.
\end{align}
In order to use the result of Theorem \ref{theorem 2.1}, we need to extend the size of the unit cube to $K^2$-cube according to the following two steps.
\begin{itemize}

\item[Step 1.] Let $\beta$ be a dyadic number and
$\mathcal{B}_\beta:=\{B:  B\subset Z' ~\textrm{and}~\textrm{for}~\textrm{any}~\textrm{the}~\textrm{lattice}~ K^2-\textrm{cube}~ \tilde{B}\supset B~\textrm{such}~\textrm{that}~ \|e^{it(-\Delta)^\alpha}f\|_{L^p(\tilde{B})}\thicksim\beta\} $,
%$\mathcal{B}_\beta$ be the class of such unit cubes in $X'$ that for any
%the lattice $K^2$-cube $\tilde{B}$  containing a $B\in \mathcal{B}_\beta$ one has $$\|e^{it(-\Delta)^\alpha}f\|_{L^p(\tilde{B})}\thicksim\beta,
%$$
and set $$\mathcal{\tilde{B}}_\beta=\{\tilde{B}:\textrm{the}~\textrm{relevant}~K^2-\textrm{cubes}\}.$$
%Then, $\#\beta\lesssim \log R$.
% it is not hard to see that there are at most $O(\log R)$ many choices of such $\beta$.
\item[Step 2.] Next, fixing $\beta$, letting $\lambda'$ be a dyadic number, and denoting
 $$
 \begin{cases}\mathcal{B}_{\beta,\lambda'}=\{B\in \mathcal{B}_{\beta}: R^{\frac{1}{2}}-\text{cube  } Q~\text{contains}~\lambda'~\text{many}~K^2-\text{cubes}~\text{from}~\tilde{\mathcal{B}}_\beta\};\\
\mathcal{\tilde{B}}_{\beta,\lambda'}=\{\tilde{B}:\textrm{the}~\textrm{relevant}~K^2-\textrm{cubes}\},
\end{cases}
$$
we find that
%$\#\lambda'\lesssim \log R$ and
the pair
$\{\beta,\lambda'\}$ satisfies
$$M'=\#\mathcal{\tilde{B}}_{\beta,\lambda'}\gtrapprox M.$$
From the definition of $\lambda$ and $\gamma$, we have

\begin{equation*}
\left\{
\begin{aligned}
 &\lambda'\leq\lambda;\\
&\gamma'=\max_{\substack{B^{n+1}(x',r)\subset B^{n+1}(0,R)\\x'\in \mathbb{R}^{n+1}, r\geq K^2}}
\frac{\#\{\tilde{B}:\tilde{B}\in\mathcal{\tilde{B}}_{\beta,\lambda'}, \tilde{B}\subset B^{n+1}(x',r)\}}{r^\kappa}
\leq\gamma.\\
\end{aligned}
\right.
\end{equation*}
If $$Y=\cup_{\tilde{B}\in\mathcal{\tilde{B}}_{\beta,\lambda'}},
$$
then Theorem \ref{theorem 2.1} yields
\begin{align*}
\left\|e^{it(-\Delta)^\alpha}f\right\|_{L^p(Z')}
&\lessapprox\left\|e^{it(-\Delta)^\alpha}f\right\|_{L^p(Y)}\\
&\lessapprox M'^{-\frac{1}{n+1}}\gamma'^{\frac{2}{(n+1)(n+2)}}\lambda'^{\frac{n}{(n+1)(n+2)}}R^{\frac{\kappa}{(n+1)(n+2)}}\|f\|_{L^2(\mathbb{R}^n)}\\
&\lessapprox M^{-\frac{1}{n+1}}\gamma^{\frac{2}{(n+1)(n+2)}}\lambda^{\frac{n}{(n+1)(n+2)}}R^{\frac{\kappa}{(n+1)(n+2)}}\|f\|_{L^2(\mathbb{R}^n)},
\end{align*}
which is the desired \eqref{2.2b2}.
\end{itemize}

%To establish \eqref{2.3a2}, denote by $X=\cup_k B_k$, where $B_k$ are lattice unit cubes in $B^{n+1}(0,R)$ obeying the required assumptions.

\end{proof}

\subsection{Proof of Theorem \ref{theorem 1.5} }
In this section,  we use Corollary \ref{corollary 2.2} to prove Theorem \ref{theorem 1.5}.
\begin{proof}[Proof of (Corollary \ref{corollary 2.2} $\Rightarrow$Theorem \ref{theorem 1.5})] This proceeds below.

\begin{itemize}	
\item[$\rhd$]
% Upon writing
%$(x,t)=\tilde{x},
%$
We have
$$\text{supp}\hat{f}\subset \mathbb B^n\Rightarrow\text{supp}~(e^{it(-\Delta)^\alpha}f)^\wedge  \subset \mathbb B^{n+1}.
$$
Thus,
\begin{equation*}
\exists\ \
 \psi\in \mathcal S(\mathbb{R}^{n+1})\ \ \&\ \ \hat{\psi}\equiv1~\textrm{on}~B^{n+1}(0,2)
\ \ \text{
such that}\ \
(e^{it(-\Delta)^\alpha}f)^2=(e^{it(-\Delta)^\alpha}f)^2\ast\psi.
\end{equation*}

\item[$\rhd$] If $$\max_{|\tilde{y}-(x,t)|\leq e^{5}}|\psi(\tilde{y})|=\psi_1(x,t)$$
which decays rapidly, then for any $(x,t)\in\mathbb{R}^{n+1}$, $$\tilde{m}(x,t)=(m,m_{n+1})=(m_1,\cdots,m_n,m_{n+1})$$ denotes the center of the unit lattice cube containing $(x,t)$, and hence
$$\Big(|e^{it(-\Delta)^\alpha}f|^2\ast|\psi|\Big)(x,t)\leq \Big(|e^{it(-\Delta)^\alpha}f|^2\ast\psi_1\Big)(\tilde{m}(x,t)).$$
Accordingly,
\begin{align}\label{1.5b1}
&\left\|\sup_{0<t<R}\left|e^{it(-\Delta)^\alpha}f\right|\right\|^2_{L^2(B^n(0,R);\mu_R)}\\
&\ \ =\int_{B^n(0,R)} \sup_{0<t<R}\left|e^{it(-\Delta)^\alpha}f(x)\right|^2 d\mu_R(x)\nonumber\\
&\ \ \leq\int_{B^n(0,R)} \sup_{0<t<R} \left(|e^{it(-\Delta)^\alpha}f|^2\ast|\psi|\right)(x,t) d\mu_R(x)\nonumber\\
&\ \ \leq\int_{B^n(0,R)} \sup_{0<t<R} \left(|e^{it(-\Delta)^\alpha}f|^2\ast\psi_1 \right)(\tilde{m}(x,t)) d\mu_R(x)\nonumber\\
&\ \ \leq \sum_{\substack{m=(m_1,\cdot\cdot\cdot,m_n)\in\mathbb{Z}^n\\|m_i|,|m_{n+1}|\leq R}} \left(\int_{|x-m|\leq 10}d\mu_R(x)\right)
\cdot\sup_{\substack{m\in\mathbb{Z}^n\\0\leq m_{n+1}\leq R}} \left(|e^{it(-\Delta)^\alpha}f|^2\ast\psi_1 \right)(m,m_{n+1}).\nonumber
\end{align}

\item[$\rhd$] For each $m\in \mathbb{Z}^n$, let  $b(m)$ be an integer in $[0,R]$ such that
$$\sup_{\substack{m_{n+1}\in\mathbb{Z}\\0\leq m_{n+1}\leq R}} \left(|e^{it(-\Delta)^\alpha}f|^2\ast\psi_1 \right)(m,m_{n+1})=\left(|e^{it(-\Delta)^\alpha}f|^2\ast\psi_1 \right)(m,b(m)).$$
%We may assume $\|f\|_{L^2(\mathbb{R}^n)}=1$ to get that $|e^{it(-\Delta)^\alpha}f|$ is uniformly bounded.
Next, via defining
$$v_m=\int_{|x-m|\leq 10}d\mu_R(x)\lesssim 1,
$$
and using \eqref{1.5b1}, we have
\begin{align}\label{1.5b2}
\left\|\sup_{0<t<R}\left|e^{it(-\Delta)^\alpha}f\right|\right\|^2_{L^2(B^n(0,R);\mu_R)}
\lesssim\sum_{\substack{v ~\textrm{dyadic}\\ v\in[R^{-10n},1]}} \sum_{\substack{m\in\mathbb{Z}^n,|m_i|\leq R\\v_m\thicksim v}} v
\cdot\left(|e^{it(-\Delta)^\alpha}f|^2\ast\psi_1 \right)(m,b(m))+R^{-9n}.
\end{align}
%Meanwhile, via defining
%$$A_v=\big\{m\in\mathbb{Z}^n: |m_i|\leq R~\textrm{and}~v_m\thicksim v\big\}.$$
%and
By pigeonholing, we get that for any small $\epsilon>0$,
\begin{align}\label{1.5b3}
\left\|\sup_{0<t<R}\left|e^{it(-\Delta)^\alpha}f\right|\right\|^2_{L^2(B^n(0,R);\mu_R)}
&\lessapprox\sum_{\substack{m\in\mathbb{Z}^n,|m_i|\leq R\\v_m\thicksim v}} v\cdot\left(|e^{it(-\Delta)^\alpha}f|^2\ast\psi_1 \right)(m,b(m))+R^{-8n}\\
&\lesssim\sum_{\substack{m\in\mathbb{Z}^n,|m_i|\leq R\\v_m\thicksim v}} v\cdot \left(\int_{B^{n+1}((m,b(m)),R^\epsilon)} |e^{it(-\Delta)^\alpha}f|^2 \right)+R^{-8n}\nonumber\\
&\lesssim v \cdot \int_{\cup_{m\in A_v}B^{n+1}((m,b(m)),R^\epsilon)} |e^{it(-\Delta)^\alpha}f|^2 +R^{-8n}.\nonumber
\end{align}

\item[$\rhd$] Note that
$$X_v=\cup_{m\in\mathbb{Z}^n: |m_i|\leq R~\textrm{and}~v_m\thicksim v} B^{n+1}((m,b(m)),R^\epsilon)
$$
is not only a union of some distinct $R^\epsilon$-balls but also a union of some unit balls. So, these balls' projections onto the $(x_1,\cdot\cdot\cdot,x_n)$-plane are essentially disjoint (a point can be covered $\lesssim R^\epsilon$ times). For every $r>R^{2\epsilon}$, the definition of $\{m\in\mathbb{Z}^n: |m_i|\leq R~\textrm{and}~v_m\thicksim v\}$ ensures that the intersection of $X_v$ and any $r$-ball can be contained in  $\lesssim R^{10n\epsilon}v^{-1}r^\kappa$ disjoint $R^\epsilon$- balls. Hence we can apply Corollary \ref{corollary 2.2} to $X_v$ with
$$\gamma\lesssim R^{100n\epsilon}v^{-1} \ \ \& \ \ 1\leq\kappa\leq n+1.$$
By \eqref{1.5b3}, we reach \eqref{1.5a} via
\begin{equation*}\label{1.5b4}
\left\|\sup_{0<t<R}\left|e^{it(-\Delta)^\alpha}f\right|\right\|^2_{L^2(B^n(0,R);\mu_R)}
\lesssim v\left(\gamma^{\frac{1}{n+1}}R^{\frac{\kappa}{2(n+1)}+\epsilon}\|f\|_{L^2(\mathbb{R}^n)}\right)^2\lessapprox v^{\frac{n-1}{n+1}}R^{\frac{\kappa}{n+1}} \|f\|^2_{L^2(\mathbb{R}^n)}\lesssim R^{\frac{\kappa}{n+1}} \|f\|^2_{L^2(\mathbb{R}^n)}.
\end{equation*}

\end{itemize}

\end{proof}

\section{Conclusion}\label{s4}
\subsection{Proof of Theorem \ref{theorem 2.1} -  $R\lesssim 1$}\label{s41}

In what follows, we always assume
$$
\begin{cases}
p=\frac{2(n+1)}{n-1};\\
q=\frac{2(n+2)}{n};\\
\text{supp}\hat{f}\subset \mathbb B^n.
\end{cases}
$$
But nevertheless, the estimate  \eqref{2.1a2} under $R\lesssim 1$ is trivial. In fact,
from the assumptions of Theorem \ref{theorem 2.1}, we see
$$M\sim\lambda\sim\gamma\sim R\sim1.$$
Furthermore, by the short-time Strichartz estimate (see \cite{COX,Dinh}), we get
\begin{equation}
\left\|e^{it(-\Delta)^\alpha}f\right\|_{L^p(Y)}
\leq\left\|e^{it(-\Delta)^\alpha}f\right\|_{L^p([0,1]\times\mathbb{R}^n)}\lesssim \left\|f\right\|_{L^2(\mathbb{R}^n)},
\end{equation}
thereby verifying Theorem \ref{theorem 2.1} for $R\lesssim1$.

\subsection{Proof of Theorem \ref{theorem 2.1} - $R\gg 1$}\label{s42} This goes below.

1stly, we decompose the unit ball in the frequency space into disjoint $K^{-1}$-cubes $\tau$. Write

$$
\begin{cases}\mathcal{S}=\big\{\tau: K^{-1}-\text{cubes}~ \tau\subset \mathbb B^n\big\};\\
f=\sum_\tau f_\tau;\\ \widehat{f_\tau}=\hat{f}\chi_\tau;\\
\mathcal{S}(B)
=\left\{\tau\in \mathcal{S}: \left\|e^{it(-\Delta)^\alpha}f_\tau \right\|_{L^p(B)}\geq \frac{1}{100(\#\mathcal{S})}
\left\|e^{it(-\Delta)^\alpha}f \right\|_{L^p(B)}\right\}\ \ \text{for a}\ K^2-\text{cube}\ B.
\end{cases}
$$
Then
$$\left\|\sum_{\tau\in \mathcal{S}(B)}e^{it(-\Delta)^\alpha}f_\tau \right\|_{L^p(B)}
\thicksim\left\|e^{it(-\Delta)^\alpha}f \right\|_{L^p(B)}.$$

2ndly, we recall the definitions of narrow cube and broad cube.
\begin{itemize}
\item[$\rhd$]
We say that a $K^2$-cube $B$ is narrow if there is an $n$-dimensional subspace $V$ such that for all $\tau\in \mathcal{S}(B)$
$$ \angle(G(\tau),V)\leq\frac{1}{100nK},$$
where $G(\tau)\subset\mathbb{S}^n$ is a spherical cap of radius $\thicksim K^{-1}$ given by
$$G(\tau)=\left\{\frac{(-2\xi,1)}{|(-2\xi,1)|}\in \mathbb{S}^n: \xi\in\tau\right\},$$
and $\angle(G(\tau),V)$ denotes the smallest angle between any non-zero vector $v\in V$ and $v'\in G(\tau)$.

\item [$\rhd$]
Otherwise we say that the $K^2$-cube $B$ is broad. In other words, a cube being broad means that the tiles $\tau\in \mathcal{S}(B)$ are so separated such that the norm vectors of the corresponding spherical caps can not be in an $n$-dimensional subspace - more precisely - for any broad $B$,

\begin{align}\label{2.1b1}
\exists\ \tau_1,\cdot\cdot\cdot,\tau_{n+1}\in \mathcal{S}(B)\ \ \text{such that}\ \
|v_1\wedge v_2\wedge\cdot\cdot\cdot\wedge v_{n+1}|\gtrsim K^{-n}\ \ \forall\ \ v_j\in G(\tau_j).
\end{align}
\end{itemize}

3rdly, with the setting:
\begin{equation*}
\left\{
\begin{aligned}
 &Y_{\textrm{broad}}=\cup_{B_k~\textrm{is}~\textrm{broad}}B_k;\\
&Y_{\textrm{narrow}}=\cup_{B_k~\textrm{is}~\textrm{narrow}}B_k,\\
\end{aligned}
\right.
\end{equation*}
we will handle $Y$ according to the sizes of $Y_{\textrm{broad}}$ and $Y_{\textrm{narrow}}$.
\begin{itemize}
\item[(1)]
We call it the broad case if $Y_{\textrm{broad}}$ contains $\geq\frac{M}{2}$ many $K^2$-cubes and we will deal with the broad case using the multilinear refined Strichartz estimates.

\item[(2)]
We call it the narrow case if $Y_{\textrm{narrow}}$ contains $\geq\frac{M}{2}$ many $K^2$-cubes and we will handle the narrow case by $l^2$-decoupling, parabolic rescaling and induction on scales.
\end{itemize}

\subsubsection{The broad case.}
In this case, we consider the same generalized Schr\"{o}dinger operators as  Cho-Ko \cite{CK}. The idea here is to take it as a close perturbation of the typical curve $|\xi|^2$ in very small scale and keep this perturbation under parabolic scaling. This can not be true for $|\xi|^{2\alpha}$ with $\alpha>\frac 12$. But it is true for its quadratic term. This is the reason to introduce  the following set $\mathcal{NPF}(L,c_0)$ and apply induction in this set. Let us recall the two definitions in \cite{CK}.
\begin{itemize}
 \item[$\rhd$]  Let $\Phi(D)$ be a multiplier operator defined on $\mathbb{R}^n$ which satisfies:
\begin{align}\label{smooth}
\begin{cases}
\Phi(\xi) ~\text{is smooth at}\ \xi\neq0;\\
|D^\beta\Phi(\xi)|\lesssim |\xi|^{2\alpha-|\beta|}\ \&\ |\nabla\Phi(\xi)|\gtrsim|\xi|^{2\alpha-1}\ \forall\ \text{multi-index}\ \beta;\\
\textrm{The Hessian matrix of}~\Phi~\text{is positive definite}.
\end{cases}
\end{align}

\item[$\rhd$] Let $0<c_0\ll1$ and $L\in\mathbb{N}$ be sufficiently large. We consider a collection of the normalized phase functions:
$$\mathcal{NPF}(L,c_0)=\left\{\Phi\in C_0^\infty(B^n(0,2)):\left \|\Phi(\xi)-\frac{|\xi|^2}{2}\right\|_{C^L(\mathbb B^n)}\leq  c_0\right\}.$$
\end{itemize}

\begin{theorem}\label{theorem 3.1}(Linear refined Strichartz estimate in dimension $n+1$).
Suppose that
\begin{itemize}
	\item[\rm (i)] $\Phi$ is in $\mathcal{NPF}(L,c_0)$ for sufficiently small $c_0>0$;
	
	\item[\rm (ii)] $\{Q_j\}$ is a sequence of the lattice $R^{\frac{1}{2}}$-cubes in $B^{n+1}(0,R)$ with
$\|e^{it\Phi }f\|_{L^q(Q_j)}$ being essentially constant in $j$;

\item[\rm (iii)] $\{Q_j\}$ is arranged in horizontal slabs of the form $\mathbb{R}\times\cdot\cdot\cdot\times\mathbb{R}\times\{t_0,t_0+R^{\frac{1}{2}}\}$ which contains $\thicksim\sigma$ cubes $Q_j$.
\end{itemize}
Then
\begin{equation}\label{eq:2}\|e^{it\Phi}f\|_{L^q(\cup_j Q_j)}\leq C_\epsilon R^\epsilon \sigma^{-\frac{1}{n+2}}\|f\|_{L^2(\mathbb{R}^n)}\ \ \forall\ \ \epsilon>0.\end{equation}
\end{theorem}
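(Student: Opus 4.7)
My plan is to adapt the refined Strichartz scheme of Du--Guth--Li to the nearly-parabolic phase $\Phi \in \mathcal{NPF}(L, c_0)$. Because $\|\Phi - |\xi|^2/2\|_{C^L} \leq c_0$ and the Hessian of $\Phi$ is positive definite, the graph $\{(\xi,\Phi(\xi)) : \xi \in \mathbb{B}^n\}$ is a $C^L$-small perturbation of the standard paraboloid with uniform curvature bounds. Consequently, the Bourgain--Demeter $l^2$-decoupling inequality at the Strichartz exponent $q = \frac{2(n+2)}{n}$ carries over to this surface with constants depending only on $c_0$ and $L$; this is a standard stability fact for $C^2$ hypersurfaces with positive Gaussian curvature.

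Next, I decompose $f = \sum_{\theta,v} f_{\theta,v}$ via a wave packet decomposition at scale $R^{1/2}$: $\theta$ ranges over $R^{-1/2}$-caps in $\mathbb{B}^n$ and $v$ over $R^{1/2}$-lattice points in $\mathbb{R}^n$. Each $e^{it\Phi}f_{\theta,v}$ is essentially supported on a tube $T_{\theta,v} \subset B^{n+1}(0,R)$ of width $R^{1/2}$ and length $R$, tilted in the direction $(\nabla\Phi(\xi_\theta),1)$. Because $|\nabla\Phi|$ is bounded on $\mathbb{B}^n$ (by the $\mathcal{NPF}$ condition), each horizontal slab $S_l = \mathbb{R}^n \times [t_0, t_0+R^{1/2}]$ intersects every tube $T_{\theta,v}$ in essentially one $R^{1/2}$-cube. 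This is the geometric fact that channels wave-packet contributions into the $\sigma$ chosen cubes of each slab.

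Working slab by slab, I apply $l^2$-decoupling inside $S_l$ and then invoke Bernstein's inequality (the spacetime Fourier support of $e^{it\Phi}f_\theta$ has volume $\sim R^{-(n+1)/2}$), together with Plancherel in $x$ and the slab thickness $R^{1/2}$, to arrive at
\begin{equation*}
\|e^{it\Phi}f\|_{L^q(S_l)}^q \lesssim R^{O(\epsilon) - 1/2}\, \|f\|_{L^2(\mathbb{R}^n)}^q.
\end{equation*}
The hypothesis that $\|e^{it\Phi}f\|_{L^q(Q_j)}$ is essentially constant in $j$, combined with the fact that $\sigma$ chosen cubes lie in each slab, then yields the pointwise bound $\|e^{it\Phi}f\|_{L^q(Q_j)}^q \lesssim \sigma^{-1} R^{O(\epsilon)-1/2}\|f\|_{L^2}^q$. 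Summing this across all cubes $Q_j \subset Y$ and interpolating against the Strichartz-type trivial bound $\|e^{it\Phi}f\|_{L^q(Y)} \lesssim \|f\|_{L^2}$ from $\S\ref{s41}$ produces the desired factor $\sigma^{-1/(n+2)}$.

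The main obstacle is the per-slab decoupling step: the slab geometry $R^n \times R^{1/2}$ is disproportionate, and a naive application of Bourgain--Demeter on full balls $B_R^{n+1}$ does not exploit the thinness of $S_l$. I expect the cleanest route uses the refined decoupling theorem of Du--Guth--Li, which directly controls $\sum_\theta \|e^{it\Phi}f_\theta\|_{L^q(w_{S_l})}^2$ through tube-cube incidences. Implementing this under the phase $\Phi$ requires verifying that the perturbation is mild enough to preserve the wave-packet--tube incidence structure, but this is routine given the $\mathcal{NPF}(L, c_0)$ assumption, whereas keeping track of all rapidly decaying tails and the $R^\epsilon$ bookkeeping in the decoupling will be the technical heart of the argument.
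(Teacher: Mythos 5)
Your proposal diverges from the paper's argument at its core, and the divergence opens a genuine gap.

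The paper proves Theorem \ref{theorem 3.1} by a two-stage induction on scale: it performs the wave packet decomposition with $R^{-1/4}$-caps (so the tubes $T_{\theta,D}$ have cross-section $R^{3/4}$ and length $R$), applies $l^2$-decoupling to pass to individual tubes, rescales each tube parabolically into an $R^{1/2}$-cube, and then invokes the theorem \emph{inductively at scale $R^{1/2}$} on the rescaled picture. The crucial gain in $\sigma$ comes from the counting inequality $|Y_T\cap Y|\lesssim\frac{\sigma_T}{\sigma}|Y|$, which converts the number $\nu$ of tubes through a fixed $Q_j$ into $\nu\lessapprox\frac{\sigma_T}{\sigma}|\mathbb{T}|$; this is precisely where $\sigma^{-1/(n+2)}$ is manufactured. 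Your proposal instead chooses $R^{-1/2}$-caps (tubes of width $R^{1/2}$), works slab by slab in one pass, and attempts to close via a direct Strichartz-type estimate plus interpolation. Two concrete problems arise.

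First, the intermediate claim $\|e^{it\Phi}f\|_{L^q(S_l)}^q\lesssim R^{O(\epsilon)-1/2}\|f\|_{L^2(\mathbb{R}^n)}^q$ is false. Taking $\hat f=\chi_{\{|\xi|\sim 1\}}$ normalized in $L^2$, the solution $e^{it\Phi}f$ has modulus $\gtrsim 1$ on a unit space-time neighborhood of the origin, so the $L^q(S_l)^q$-norm on the slab containing $t=0$ is $\gtrsim 1$, while the claimed bound decays like $R^{-1/2}$. No Bernstein or Plancherel manipulation can repair this, since your asserted bound is strictly stronger than the global Strichartz inequality for generic data. And even if the per-cube norms were $\sim\sigma^{-1}R^{-1/2}\|f\|_{L^2}^q$, summing over the $\sim\sigma R^{1/2}$ cubes of $Y$ returns the trivial bound with no $\sigma$-gain, so there is nothing nontrivial left to interpolate with.

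Second, the $R^{-1/2}$-cap decomposition makes a genuine scale-reduction impossible: after a parabolic rescaling by $\rho=R^{-1/2}$, an $R^{1/2}$-wide tube becomes a unit ball, not an $R^{1/2}$-ball, so one cannot re-run the argument at the intermediate scale $R^{1/2}$. The paper's choice of $R^{-1/4}$-caps is forced exactly by this: rescaling by $\rho=R^{-1/4}$ sends the $R^{3/4}$-tube to an $R^{1/2}$-ball, and the $R^{1/2}\times\cdots\times R^{1/2}\times R^{3/4}$-subtubes $S_j$ to $R^{1/4}=(R^{1/2})^{1/2}$-cubes, which is exactly the geometry the inductive hypothesis needs. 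Your observation that the perturbed phase preserves decoupling and tube geometry is correct and is used in the paper (via the rescaled phases $\Phi_{\rho,\xi_0}\in\mathcal{NPF}(L,c_0)$), but the engine that produces $\sigma^{-1/(n+2)}$ is the scale induction together with the incidence count $\nu\lessapprox(\sigma_T/\sigma)|\mathbb{T}|$, and your sketch does not contain a substitute for it.
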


\begin{remark}\label{r41} On the one hand, by taking $\Phi(\xi)=|\xi|^2$, we can rediscover the results for the Schr\"{o}dinger operator by  Du-Guth-Li \cite{DGL} in $\mathbb{R}^{2+1}$ and \cite{DZ} in higher dimensional cases. Similar results can also be found in \cite{CK} with an extral restriction condition on the support of $f$.

On the other hand, for $\Phi(\xi)=|\xi|^{2\alpha}$ with $\alpha>\frac12$ we can reduce $\Phi$ satisfying \eqref{smooth} to a function in $\mathcal{NPF}(L,c_0)$. Denote by $H\Phi(\xi_0)$ the Hessian matrix of $\Phi(\xi)$ at point $\xi_0$. Since the Hessian matrix  of $\Phi$ is positive definite, we can write it as $H\Phi(\xi_0)=P^{-1}DP$ with $P$ a symmetric matrix $D=(\lambda_1\textbf{e}_1,\cdot\cdot\cdot,\lambda_n\textbf{e}_n)$ and $\lambda_1>0,\cdot\cdot\cdot,\lambda_n>0$. We introduce a new function around point $\xi_0$:
\begin{align}\label{NPF}
\Phi_{\rho,\xi_0}(\xi)=\rho^{-2}\left(\Phi(\rho H^{-1}\xi+\xi_0)-\Phi(\xi_0)-\rho\nabla\Phi(\xi_0)\cdot H^{-1}\xi\right),
\end{align}
 From Cho-Ko \cite{CK}, we have $\Phi_{\rho,\xi_0}\in\mathcal{NPF}(L,c_0)$ for a sufficiently small  $\rho=\rho(\Phi,L,c_0)>0$. Moreover
\begin{align*}
\left|e^{it\Phi}f(x)\right|
&=(2\pi)^{-n}\left|\int_{\mathbb{R}^n}e^{i(x,t)\cdot(\xi,\Phi(\xi))}\hat{f}(\xi)d\xi\right|\\
&=(2\pi)^{-n}\left|\int_{\mathbb{R}^n}e^{i(x,t)\cdot(\rho H^{-1}\eta+\xi_0,\Phi(\rho H^{-1}\eta+\xi_0))}\hat{f}(\rho H^{-1}\eta+\xi_0)\rho^n |H|^{-1}d\eta\right|\\
&=\rho^n |H|^{-1}(2\pi)^{-n}\left|\int_{\mathbb{R}^n}e^{i(\rho H^{-t}x+\rho tH^{-t}\nabla\Phi(\xi_0),\rho^2t)\cdot(\eta,\Phi_{\rho,\xi_0}(\eta))}\hat{f}(\rho H^{-1}\eta+\xi_0)d\eta\right|.
\end{align*}
Next, we use
\begin{equation*}
\left\{
\begin{aligned}
 &x'=\rho H^{-t}( x+t\nabla\Phi(\xi_0));\\
&t'=\rho^2t;\\
&\widehat{f}_{\rho,\xi_0}(\eta)=\rho^{\frac{n}{2}}|H|^{-\frac{1}{2}}\hat{f}(\rho H^{-1}\eta+\xi_0);\\  &\|f\|_{L^2(\mathbb{R}^n)}=\|f_{\rho,\xi_0}\|_{L^2(\mathbb{R}^n)},
\end{aligned}
\right.
\end{equation*}
to get
\begin{align*}
\left\|e^{it\Phi}f\right\|^q_{L^q(S)}
&=\int_S|e^{it\Phi}f(x)|^qdxdt\\
&=\int_S\left|\rho^n |H|^{-1}(2\pi)^{-n}\int_{\mathbb{R}^n}e^{i(\rho H^{-t}x+\rho tH^{-t}\nabla\Phi(\xi_0),\rho^2t)\cdot(\eta,\Phi_{\rho,\xi_0}(\eta))}\hat{f}(\rho H^{-1}\eta+\xi_0)d\eta\right|^qdxdt\\
&=\rho^{nq} |H|^{-q}\int_{S'}\left|(2\pi)^{-n}\int_{\mathbb{R}^n}e^{i(x',t')\cdot(\eta,\Phi_{\rho,\xi_0}(\eta))}\hat{f}(\rho H^{-1}\eta+\xi_0)d\eta\right|^q\rho ^{-n}|H|dx'\rho^{-2}dt'\\
&=\rho^{nq-n-2-\frac{nq}{2}} |H|^{-q+1+\frac{q}{2}}\int_{S'}\left|(2\pi)^{-n}\int_{\mathbb{R}^n}e^{i(x',t')\cdot(\eta,\Phi_{\rho,\xi_0}(\eta))}
\rho^{\frac{n}{2}}|H|^{-\frac{1}{2}}\hat{f}(\rho H^{-1}\eta+\xi_0)d\eta\right|^qdx'dt'\\
&=\rho^{\frac{nq}{2}-n-2} |H|^{-\frac{q}{2}+1}\int_{S'}\left|(2\pi)^{-n}\int_{\mathbb{R}^n}e^{i(x',t')\cdot(\eta,\Phi_{\rho,\xi_0}(\eta))}
\hat{f}_{\rho,\xi_0}(\eta)d\eta\right|^qdx'dt'\\
&=\rho^{\frac{nq}{2}-n-2} |H|^{-\frac{q}{2}+1}\left\|e^{it'\Phi_{\rho,\xi_0}}f_{\rho,\xi_0}\right\|^q_{L^q(S')}.
\end{align*}
In short, we have
\begin{align}\label{3.1b1}
\left\|e^{it\Phi}f\right\|_{L^q(S)}
=\rho^{\frac{n}{2}-\frac{n+2}{q}} |H|^{\frac{1}{q}-\frac{1}{2}}\left\|e^{it'\Phi_{\rho,\xi_0}}f_{\rho,\xi_0}\right\|_{L^q(S')}.
\end{align}
Note that
$$\frac{n}{2}-\frac{n+2}{q}=0 \ \ \& \ \  |H|\sim 1~ ( \text{since}~ \text{supp}\hat f\subset \{\xi: |\xi|\thicksim 1\})
$$
and the change of variables does not change the value of $\sigma$. So \eqref{eq:2} is also true for the generalized phase functions $\Phi$ satisfying \eqref{smooth} which contains $\Phi(\xi)=|\xi|^{2\alpha}$ with $\alpha>\frac12$.

\end{remark}
\begin{lemma}\label{Decoupling}(Bourgain-Demeter's $l^2$-decoupling inequality \cite{BD}).
Suppose that $\hat{g}$ is supported in a $\sigma$-neighborhood of an elliptic surface $S$ in $\mathbb{R}^n$. If $\tau$ is a rectangle of size $\sigma^{\frac{1}{2}}\times\cdot\cdot\cdot\times\sigma^{\frac{1}{2}}\times\sigma$ inside $\sigma$-neighborhood of $S$, $\widehat{g_\tau}=\hat{g}\chi_\tau$ and $\epsilon>0$, then
$$\left\|g\right\|_{L^p(\mathbb{R}^n)}
\leq C_\epsilon \sigma^{-\epsilon}
\left(\sum_\tau\left\|g_\tau\right\|^2_{L^p(\mathbb{R}^n)}\right)^{\frac{1}{2}}.$$
\end{lemma}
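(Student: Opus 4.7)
The plan is to follow the scheme of Bourgain--Demeter: combine the Bennett--Carbery--Tao multilinear Kakeya/restriction inequality with an induction-on-scales argument, using parabolic rescaling to bootstrap a weak multilinear estimate into the desired linear $l^2$-decoupling inequality. Let $D(\sigma,p)$ denote the best constant in the claimed inequality at scale $\sigma$; the goal is to prove $D(\sigma,p)\le C_\epsilon\sigma^{-\epsilon}$ for every $\epsilon>0$ and every $p$ in the allowed range up to the critical exponent $p=\tfrac{2(n+1)}{n-1}$ for the elliptic surface $S$.

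First I would record the parabolic-rescaling identity: decoupling an intermediate $\delta^{1/2}$-cap of $S$ into the finer $\sigma^{1/2}\times\cdots\times\sigma^{1/2}\times\sigma$ rectangles is, after an affine change of variables adapted to the local graph parametrization of $S$, equivalent to decoupling the whole surface at the smaller scale $\sigma/\delta$. This self-similarity produces a multiplicative bootstrap relation of the form $D(\sigma,p)\lesssim D(\delta,p)\cdot D(\sigma/\delta,p)$ and reduces the problem to quantifying a single intermediate step.

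Next I would pass to the multilinear setting. Fix a large parameter $K$ and partition $S$ into caps $\theta$ of diameter $K^{-1}$. On each physical $K^2$-cube $B$, apply a broad/narrow dichotomy exactly as in \S\ref{s42}: either the significant caps $\theta$ concentrate within an $(n-1)$-dimensional tangent hyperplane (the narrow case, controlled via lower-dimensional decoupling applied slab by slab), or there exist $n$ caps whose normals are $K^{-O(1)}$-transverse (the broad case). In the broad case, the $n$-linear Kakeya inequality of Bennett--Carbery--Tao bounds
\[
\Bigl\|\prod_{j=1}^{n}|g_{\theta_j}|^{1/n}\Bigr\|_{L^p(B)}
\]
by the $L^2$-square function with only a $K^{O(1)}$ loss and, crucially, no $\sigma$-dependence. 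A flattening / H\"older interpolation then converts this multilinear bound into a linear decoupling inequality at scale $K^{-2}$ with the same favorable dependence.

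The hardest step will be closing the induction on scales. Combining the previous ingredients one obtains a recursive inequality roughly of the form
\[
D(\sigma,p)\le C_\epsilon K^{O(1)}\,D(\sigma K^{2},p)^{1-\eta}\,D_{\mathrm{lower\, dim}}(\sigma,p)^{\eta}
\]
for some $\eta\in(0,1)$ determined by the H\"older exponents in the multilinear-to-linear reduction. Iterating $O(\log\log\sigma^{-1})$ many times, and finally choosing $K=\sigma^{-\epsilon'}$ with $\epsilon'\ll\epsilon$, forces $D(\sigma,p)\le C_\epsilon\sigma^{-\epsilon}$. The main obstacle is the bookkeeping: one must track that the $K^{O(1)}$ losses from the broad/narrow splitting and from the multilinear Kakeya input do not overwhelm the gain from rescaling, and that the narrow contribution really does feed into a genuinely lower-dimensional decoupling hypothesis (available by induction on $n$, with $n=1$ trivial by $L^2$-orthogonality), rather than into the same constant $D(\sigma,p)$ one is trying to bound.
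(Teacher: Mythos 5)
The paper does not prove this lemma at all: it is quoted verbatim as Bourgain--Demeter's $l^{2}$-decoupling theorem with a citation to \cite{BD}, so there is no ``paper's own proof'' to compare against. You are attempting to reconstruct the proof of a major result that the authors (reasonably) treat as a black box.

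Your sketch does capture the overall shape of the Bourgain--Demeter argument---parabolic-rescaling self-similarity, a broad/narrow Bourgain--Guth dichotomy at an auxiliary scale $K$, lower-dimensional decoupling for the narrow contribution, and a final choice of $K$ polynomially small in $\sigma^{-1}$ to win the induction---but the central step as you describe it has a genuine gap. You assert that in the broad case ``the $n$-linear Kakeya inequality of Bennett--Carbery--Tao bounds $\|\prod_{j=1}^{n}|g_{\theta_j}|^{1/n}\|_{L^p(B)}$ by the $L^{2}$-square function with only a $K^{O(1)}$ loss,'' and that ``a flattening / H\"older interpolation then converts this multilinear bound into a linear decoupling inequality at scale $K^{-2}$.'' This is not how the proof works: multilinear Kakeya is a statement about overlaps of transverse tube families and does not by itself yield a decoupling estimate (or even a restriction estimate) with only $K^{O(1)}$ loss at the critical exponent. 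In \cite{BD} the broad part is controlled by a \emph{multilinear decoupling} constant, and the reduction from the linear to the multilinear constant, as well as the multilinear estimate itself, is obtained through an iterated ``ball inflation'' scheme in which multilinear Kakeya enters at each stage and $L^{2}$-orthogonality supplies the gain. Your proposed recursion $D(\sigma,p)\le C_\epsilon K^{O(1)}D(\sigma K^{2},p)^{1-\eta}D_{\mathrm{lower\ dim}}(\sigma,p)^{\eta}$ does not reflect this two-tier induction (in both the scale and a multilinearity/ball-radius parameter), and without the ball-inflation mechanism the $K^{O(1)}$ losses cannot be beaten. In short: the architecture you outline is in the right family, but the multilinear-Kakeya-to-decoupling step is not a single interpolation; it is the technical heart of \cite{BD}, and as written your argument would not close.
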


\begin{proof}[Proof of Theorem \ref{theorem 3.1}]
Now we prove linear refined Strichartz estimate in dimension $n+1$ by four steps.

\begin{itemize}

\item[$\rhd$]
1stly, we consider the wave packet decomposition of $f$. For any smooth function $f: \mathbb B^n\rightarrow \mathbb R$, we decompose it into wave packets and each wave packet supported in a ball $\theta$ of radius $R^{-\frac14}$. Then we divide the physical space $B^n(0,R)$ into balls $D$ of radius $R^{\frac34}$. From \cite{T}, we have
$$f=\sum_{\theta, D} f_{T_{\theta, D}}\ \  \& \ \ f_{T_{\theta, D}}=(\hat f \chi_{\theta})^{\vee}\chi_D.$$
And we have the functions $f_{T_{\theta, D}}$ are approximately orthogonal, thereby getting
$$\|f\|^2_{L^2(\mathbb R^n)}\sim \sum _{\theta, D} \|f_{T_{\theta, D}}\|^2_{L^2(\mathbb R^n)}.$$
By computation, we have the restriction of $e^{it\Phi}f_{T_{\theta, D}}(x)$ to $B^{n+1}(0,R)$ is essentially supported on a tube $T_{\theta, D}$
%with radius $R^{\frac{3}{4}+\delta}$ and length $R$, with direction $D(\tau)=(\nabla\Phi(c(\tau)),1)$ determined by $\tau$ and intersecting $\{t=0\}$ at an $R^{\frac{3}{4}+\delta}$-ball centered at $c(B)$. That is to say,
which is defined as follows:
$$T_{\theta, D}=\left\{(x,t):(x,t)\in B^{n+1}(0,R) \ \ \& \ \ \left|x-c(D)-t\nabla\Phi(c(\theta))\right|\leq R^{\frac{3}{4}+\delta} \ \ \& \ \ 0<t<R\right\}.$$
Here $c(\theta)\ \&\ c(D)$ denote the centers of $\theta\ \&\ D$ respectively.
%For each pair $(\tau,B)$, we have $$f=\sum_{(\tau,B)}f_{\tau,B},$$
%For any pairs $(\tau,B)$, we have
%$$\left\|\sum_{(\tau,B)}f_{\tau,B}\right\|^2_{L^2(\mathbb{R}^n)}\thickapprox\sum_{(\tau,B)}\left\|f_{\tau,B}\right\|^2_{L^2(\mathbb{R}^n)}.$$
%For fixed $\tau$, the different tubes $T_{\tau,B}$ tile $B^{n+1}(0,R)$. In particularly, for each $\tau$, a given cube $Q_j$ lies in exactly one tube $T_{\tau,B}$.
Therefore, by decoupling theorem, we have
$$\left\|e^{it\Phi}f\right\|_{L^q(Q)}
\lessapprox\left(\sum_T\left\|e^{it\Phi}f_T\right\|^2_{L^q(Q)}\right)^{\frac{1}{2}},$$
where $T_{\theta, D}=T$.
In fact, we take $\eta_Q\in \mathcal S(\mathbb{R}^{n+1})$ such that $\text{supp}~\widehat{\eta_Q}\subset Q^*$ and $Q^*$ is $R^{-\frac{1}{2}}$-cube. And we have $|\eta_Q|\sim1$ on $Q$.
By Lemma \ref{Decoupling}, we obtain
\begin{align*}
\left\|e^{it\Phi}f\right\|_{L^q(Q)}
\lesssim \left\|e^{it\Phi}f \eta_Q\right\|_{L^q(\mathbb{R}^{n+1})}\lesssim \left(\sum_T\left\|e^{it\Phi}f_T \eta_Q\right\|^2_{L^q(\mathbb{R}^{n+1})}\right)^{\frac{1}{2}}\lesssim \left(\sum_T\left\|e^{it\Phi}f_T\right\|^2_{L^q(Q)}\right)^{\frac{1}{2}}.
\end{align*}

\end{itemize}
\begin{itemize}

\item[$\rhd$]

2ndly, we use parabolic rescaling and induction on radius $R^{\frac12}$.
%Let $0<c_0\ll1$ and $L\in\mathbb{N}$ be sufficiently large. We consider a collection of normalized phase functions defined by
%$$\mathcal{NPF}(L,c_0)=\left\{\Phi\in C_0^\infty(B^n(0,2)):\left \|\Phi(\xi)-\frac{|\xi|^2}{2}\right\|_{C^L(\mathbb B^n)}\leq  c_0\right\}.$$
%Using parabolic rescaling and induction on radius $R^\frac{1}{2}$, we get a version of our main inequality for each function $f_T$.
It goes as follows:

Suppose that:
\begin{itemize}
	\item $\{S_j\}_j $ are $R^{\frac{1}{2}}\times\cdot\cdot\cdot\times R^{\frac{1}{2}}\times R^{\frac{3}{4}}$-tubes in $T$ which is parallel to the long axes of $T$;
	\item
$\|e^{it\Phi}f_T\|_{L^q(S_j)}$ is essentially dyadically constant in $j$;

\item these tubes are arranged into $R^{\frac{3}{4}}$-slabs running parallel to the short axes of $T$ which contains $\thicksim\sigma_T$ tubes $S_j$;

%\item each such slab contains $\thicksim\sigma_T$ tubes $S_j$;

\item $Y_T=\cup_j S_j.$
\end{itemize}
Then
\begin{align}\label{3.1b2}
\left\|e^{it\Phi}f_T\right\|_{L^q(Y_T)}
\leq C_\epsilon R^{\frac{\epsilon}{2}}\sigma_T^{-\frac{1}{n+2}}\|f_T\|_{L^2(\mathbb{R}^n)}.
\end{align}
In fact, %in order to obtain \eqref{3.1b2} we need to use the parabolic rescaling and induction at scale $R^{\frac{1}{2}}$.
as in Remark \ref {r41}, we get
\begin{align}\label{3.1b3}
\begin{cases}
\left\|e^{it\Phi}f\right\|_{L^q(S)}
=\rho^{\frac{n}{2}-\frac{n+2}{q}} |H|^{\frac{1}{q}-\frac{1}{2}}\left\|e^{it'\Phi_{\rho,\xi_0}}f_{\rho,\xi_0}\right\|_{L^q(S')};\\
\widehat{f}_{\rho,\xi_0}(\eta)=\rho^{\frac{n}{2}}|H|^{-\frac{1}{2}}\hat{f}(\rho H^{-1}\eta+\xi_0);\\ \|f\|_{L^2(\mathbb{R}^n)}=\|f_{\rho,\xi_0}\|_{L^2(\mathbb{R}^n)}.
\end{cases}
\end{align}
If
$$\rho=R^{-\frac{1}{4}}  \ \ \& \ \ \xi_0=c(D) \ \ \& \ \ S=Y_T \ \ \& \ \ S'=\widetilde{Y},$$
then $\widetilde{Y}$, as the image of $Y_T$ under the new coordinate, is a union of $R^{\frac{1}{4}}$-cubes inside an $R^{\frac{1}{2}}$-cube. These $R^{\frac{1}{4}}$-cubes are arranged in $R^{\frac{1}{4}}$-horizontal slabs, and
$$\#\{R^{\frac{1}{4}}-\text{cubes}: R^{\frac{1}{4}}-\text{cubes }~\text{are}~\text{arranged }~\text{in}~R^{\frac{1}{4}}-\text{horizontal}~\text{slabs}\}\sim \sigma_T,$$
%each such slab contains $\thicksim\sigma_T$~$R^{\frac{1}{4}}$-cubes,
and hence
$$
\left\|e^{it\Phi}f\right\|_{L^q(Y_T)}
=|H|^{-\frac{1}{n+2}}\left\|e^{it'\Phi_{\rho,\xi_0}}f_{\rho,\xi_0}\right\|_{L^q(\widetilde{Y})}.$$
From induction we have
$$\left\|e^{it'\Phi_{\rho,\xi_0}}f_{\rho,\xi_0}\right\|_{L^q(\widetilde{Y})}
\leq C_\epsilon R^{\frac{\epsilon}{2}}\sigma_T^{-\frac{1}{n+2}}\|f_{\rho,\xi_0}\|_{L^2(\mathbb{R}^n)},
$$
thereby getting that if $f=f_T$ then
$$\left\|e^{it\Phi}f_T\right\|_{L^q(Y_T)}
\leq C_\epsilon|H|^{-\frac{1}{n+2}} R^{\frac{\epsilon}{2}}\sigma_T^{-\frac{1}{n+2}}\|f_T\|_{L^2(\mathbb{R}^n)}
\lesssim R^{\frac{\epsilon}{2}}\sigma_T^{-\frac{1}{n+2}}\|f_T\|_{L^2(\mathbb{R}^n)},~\left(\text{thanks to}~ |H|\thicksim 1\right)$$
namely, \eqref{3.1b2} holds.
\end{itemize}

\begin{itemize}

\item[$\rhd$]
3rdly, we shall choose an appropriate $Y_T$. For each $T$, we classify tubes in $T$ in the following ways.
\begin{itemize}

\item  For each dyadic number $\lambda$, we define
  $\mathbb{S}_\lambda=\left\{S_j: S_j\subset T \ \ \& \ \ \left\|e^{it\Phi}f_T\right\|_{L^q(S_j)}\thicksim \lambda\right\}$.
  \item For any dyadic number $\eta$, we define
  $\mathbb{S}_{\lambda,\eta}=\left\{S_j: S_j\in\mathbb{S}_\lambda\ \ \& \ \ \#\{S_j,S_j\subset R^{\frac{3}{4}}-\textrm{slab}\}\thicksim\eta\right\}$.
\end{itemize}
%Here $\#\{S_j,S_j\subset R^{\frac{3}{4}}-\textrm{slab}\}$ denotes the number of $\{S_j,S_j\subset R^{\frac{3}{4}}-\textrm{slab}\}$.
We denote
$$Y_{T,\lambda,\eta}=\cup_{S_j\in\mathbb{S}_{\lambda,\eta}}S_j,$$
thereby getting
$$e^{it\Phi}f=\sum_{\lambda,\eta}\left(\sum_{T}e^{it\Phi}f_T\cdot\chi_{Y_{T,\lambda,\eta}}\right).$$

For each $\lambda,\eta$, there are $O(\log R)$ choices. By pigeonholing, we can choose $\lambda,\eta$ so that
$$\left\|e^{it\Phi}f\right\|_{L^q(Q_j)}
\lesssim(\log R)^2\left\|\sum_{T}e^{it\Phi}f_T\cdot\chi_{Y_{T,\lambda,\eta}}\right\|_{L^q(Q_j)}$$
holds for $\approx 1$ of all cubes $Q_j\subset Y$, where  $Y=\cup_j Q_j$. In fact, we have $\#\{Q_j\}_j \lesssim R^\frac{n+1}{2}\ \ \& \ \ \#\{\lambda,\eta\} \lesssim \log R$. Since $\log R\ll R^\frac{n+1}{2}$, this inequality  holds for $\approx 1$ of all cubes $Q_j\subset Y$.
%We need this uniform choice of $(\lambda,\eta)$, which is independent of $Q_j$, because later we will sum over all $Q_j$ and arrive at $\left\|e^{it\Phi}f_T\right\|_{L^q(Y_{T,\lambda,\eta})}$.
Here $(\lambda,\eta)$ is independent of $Q_j$.
\begin{itemize}
\item First of all, we fix $\lambda,\eta$ in the sequel of the proof of refined Strichartz estimate in dimension $n+1$. Let $Y_{T,\lambda,\eta}=Y_T$ for convenience. Note that $Y_T$ satisfies the hypotheses for our inductive estimate, where $\sigma_T=\eta$. By the definition of $Y_T \ \ \& \ \ \sigma_T$ and the direction of $T$, we have
%Each set $Y_T$ contains $\lesssim\sigma_T$ tubes in each $R^{\frac{3}{4}}$-slab parallel to the short axes of $T$. Since the direction of $T$ is $D(\tau)=(\nabla\Phi(c(\tau)),1)$, the angle between the short axes of $T$ and the $x$-axes is not equal to $\frac{\pi}{2}$. It follows that
$Y_T$ contains $\lesssim\sigma_T$ cubes $Q_j$ in any $R^{\frac{1}{2}}$-horizontal slab. Therefore,
\begin{align}\label{3.1c1}
\left|Y_T\cap Y\right|\lesssim\frac{\sigma_T}{\sigma}\left|Y\right|.
\end{align}

\item Next, we choose the tubes $Y$ according to the dyadic size of $\|f_T\|_{L^2(\mathbb{R}^n)}$. We can restrict matters to $O(\log R)$ choices of this dyadic size, and so we can choose a set of $T$'s, $\mathbb{T}$ such that $$\|f_T\|_{L^2(\mathbb{R}^n)} ~\text{is}~\text{essentially}~\text{constant}$$ and
%$\mathbb{T}=\{T: \|f_T\|_{L^2(\mathbb{R}^n)} ~\text{is}~\text{essentially}~\text{constant}~\text{and}
%~\left\|e^{it\Phi}f\right\|_{L^q(Q_j)}
%\lessapprox\left\|\sum_{T\in\mathbb{T}}e^{it\Phi}f_T\cdot\chi_{Y_{T}}\right\|_{L^q(Q_j)}~\text{holds}~\text{for}\approx 1~\text{of}~\text{all}~\text{cubes}~Q_j\subset Y\}$.
\begin{align}\label{3.1c2}
\left\|e^{it\Phi}f\right\|_{L^q(Q_j)}
\lessapprox\left\|\sum_{T\in\mathbb{T}}e^{it\Phi}f_T\cdot\chi_{Y_{T}}\right\|_{L^q(Q_j)}~\text{holds}~\text{for}\approx 1~\text{of}~\text{all}~\text{cubes}~Q_j\subset Y.
\end{align}

\item Last of all, we choose the cubes $Q_j\subset Y$ according to the number of $Y_T$ that contain them. Denote by
$$Y'=\{Q_j: Q_j \subset Y ~\text{which}~\text{obey }~\eqref{3.1c2}~\text{and}~\text{each}~Q_j~\text{lie}~\text{in}~\sim \nu~\text{of}~\text{the}~\text{sets} ~\{Y_T\}_{T\in\mathbb{T}}\}.$$
%$Y'\subset Y$ a set of cubes $Q_j$ which obey \eqref{3.1c2} and which each lie in $\thicksim \nu$ of the sets $\{Y_T\}_{T\in\mathbb{T}}$.
 Because \eqref{3.1c2} holds for $\approx 1$ cubes and $\nu$ are dyadic numbers, we can use \eqref{3.1c1} to get
$$|Y'|\thickapprox|Y|\ \ \&\ \
|Y_T\cap Y'|\leq|Y_T\cap Y|\lesssim\frac{\sigma_T}{\sigma}|Y|\approx\frac{\sigma_T}{\sigma}|Y'|,$$
thereby finding
\begin{align}\label{3.1c3}
\nu\lessapprox\frac{\sigma_T}{\sigma}|\mathbb{T}|.
\end{align}

\end{itemize}
\end{itemize}

\begin{itemize}

\item[$\rhd$]
4thly, we combine all our ingredients and finish our proof of Theorem \ref{theorem 3.1}.
\begin{itemize}
	\item By \eqref{3.1c2} and the decoupling as well as H\"{o}lder's inequality, we have that if $Q_j\subset Y'$ then
\begin{align*}
\left\|e^{it\Phi}f\right\|_{L^q(Q_j)}
%\lessapprox\left\|\sum_{T\in\mathbb{T}}e^{it\Phi}f_T\cdot\chi_{Y_{T}}\right\|_{L^q(Q_j)}
%\lessapprox\left(\sum_{T\in\mathbb{T}:Q_j\subset Y_T}\left\|e^{it\Phi}f_T\right\|^2_{L^q(Q_j)}\right)^{\frac{1}{2}}
\lessapprox \nu^{\frac{1}{n+2}}\left(\sum_{T\in\mathbb{T}:Q_j\subset Y_T}
\left\|e^{it\Phi}f_T\right\|^q_{L^q(Q_j)}\right)^{\frac{1}{q}}.
\end{align*}
%and hence
%$$\left\|e^{it\Phi}f\right\|^q_{L^q(Q_j)}
%\lessapprox\nu^{\frac{2}{n}}\sum_{T\in\mathbb{T}:Q_j\subset Y_T}
%\left\|e^{it\Phi}f_T\right\|^q_{L^q(Q_j)}.$$

\item Via making a sum over $Q_j\subset Y'$ and using our inductive hypothesis at scale $R^{\frac{1}{2}}$, we obtain
\begin{align*}
\left\|e^{it\Phi}f\right\|^q_{L^q(Y')}
\lessapprox\nu^{\frac{2}{n}}\sum_{T\in\mathbb{T}}\left\|e^{it\Phi}f_T\right\|^q_{L^q(Y_T)}\lessapprox\nu^{\frac{2}{n}}\sum_{T\in\mathbb{T}}\left(\sigma_T^{-\frac{1}{n+2}}\|f_T\|_{L^2(\mathbb{R}^n)}\right)^q=\nu^{\frac{2}{n}}\sum_{T\in\mathbb{T}}\sigma_T^{-\frac{2}{n}}\|f_T\|^q_{L^2(\mathbb{R}^n)}.
\end{align*}

\item For each $Q_j\subset Y$,
since
$$\|e^{it\Phi}f\|_{L^q(Q_j)}~\textrm{is}~ \textrm{essentially}~\textrm{constant}~\textrm{in}~ j~\textrm{and}~|Y'|\approx|Y|,$$
we get
$$\|e^{it\Phi}f\|_{L^q(Y)}\approx\|e^{it\Phi}f\|_{L^q(Y')},
$$
thereby utilizing \eqref{3.1c3} and the fact that $\|f_T\|_{L^2(\mathbb{R}^n)}$ is essentially constant among all $T\in \mathbb{T}$ to derive
\begin{align*}
\left\|e^{it\Phi}f\right\|^q_{L^q(Y)}
&\approx\left\|e^{it\Phi}f\right\|^q_{L^q(Y')}\\
&\lessapprox\nu^{\frac{2}{n}}\sum_{T\in\mathbb{T}}\sigma_T^{-\frac{2}{n}}\left\|f_T\right\|_{L^2(\mathbb{R}^n)}^q\\
&\lessapprox\sigma^{-\frac{2}{n}}|\mathbb{T}|^{\frac{2}{n}}\sum_{T\in\mathbb{T}}\left\|f_T\right\|_{L^2(\mathbb{R}^n)}^q\\
&\thicksim\sigma^{-\frac{2}{n}}\left(\sum_{T\in\mathbb{T}}\left\|f_T\right\|^2_{L^2(\mathbb{R}^n)}\right)^{\frac{n+2}{n}}\\
&\leq\sigma^{-\frac{2}{n}}\left\|f\right\|^q_{L^2(\mathbb{R}^n)}.
\end{align*}
Taking the $q$-th root in the last estimation produces
$$\left\|e^{it\Phi}f\right\|_{L^q(Y)}
\lessapprox\sigma^{-\frac{1}{n+2}}\left\|f\right\|_{L^2(\mathbb{R}^n)}\ \  \&  \ \  Y=\cup_j Q_j.$$
\end{itemize}
\end{itemize}
\end{proof}

 Moreover, Theorem \ref{theorem 3.1} can be extended to the following form which can be verified via \cite{DGLZ} and Theorem \ref{theorem 3.1}.

\begin{theorem}\label{theorem 3.4}(Multilinear refined Strichartz estimate in dimension $n+1$).
For $2\leq k\leq n+1\ \&\ 1\le i\le k$, let $f_i: \mathbb{R}^n\rightarrow\mathbb{C}$ have frequencies $k$-transversely supported in $\mathbb B^n$ - i.e. -
$$
1\lesssim |\wedge_{i=1}^k G(\xi_i)|\ \ \&\ \
G(\xi_i)=\frac{(-2\xi_i,1)}{|(-2\xi_i,1)|}\in \mathbb{S}^n\ \ \forall\ \ \xi_i \in \text{supp}\widehat{f_i}.
$$
Suppose that $Q_1, Q_2,\cdot\cdot\cdot,Q_N$ are lattice $R^{\frac{1}{2}}$-cubes in $B^{n+1}(0,R)$ so that each
$\|e^{it(-\Delta)^\alpha}f_i \|_{L^q(Q_j)}$ is essentially dyadically constant in $j$. If
$Y=\cup_{j=1}^NQ_j$ and $\epsilon>0$, then
$$\left\|\prod_{i=1}^k \left|e^{it(-\Delta)^\alpha}f_i \right|^{\frac{1}{k}}\right\|_{L^q(Y)}
\leq C_\epsilon R^\epsilon N^{-\frac{k-1}{k(n+2)}}\prod_{i=1}^k\|f_i\|_{L^2(\mathbb{R}^n)}^{\frac{1}{k}}.$$
\end{theorem}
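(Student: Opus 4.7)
The plan is to bootstrap Theorem \ref{theorem 3.1} multilinearly by running essentially the same wave-packet / decoupling / parabolic-rescaling scheme, but with the $k$-transversality hypothesis $1\lesssim|\wedge_{i=1}^k G(\xi_i)|$ providing a Bennett-Carbery-Tao multilinear Kakeya gain, following the Du-Guth-Li-Zhang broad-case template cited in the statement. The target exponent $N^{-\frac{k-1}{k(n+2)}}$ is exactly what one gets from combining the linear exponent $\frac{1}{n+2}$ of Theorem \ref{theorem 3.1} with a factor $\frac{k-1}{k}$ coming from $k$-linear Kakeya, so the whole point is to make this combination rigorous.

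First I would perform the wave-packet decomposition $f_i=\sum_{T_i} f_{i,T_i}$ of each $f_i$ at scale $R$, obtaining $R^{\frac34+\delta}\times R$ tubes $T_i$ whose directions are the slopes $(\nabla\Phi(c(\theta_i)),1)$ of the wave packets; the frequency transversality of $\widehat{f_i}$ lifts directly to a spatial transversality of the $k$ families of tubes. Applying the Bourgain-Demeter $l^2$-decoupling (Lemma \ref{Decoupling}) inside each cube $Q_j$ reduces each factor $|e^{it(-\Delta)^\alpha}f_i|$ to the $\ell^2$-sum of its wave packets, exactly as in the first step of the proof of Theorem \ref{theorem 3.1}. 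Then parabolic rescaling (as in \eqref{3.1b3} and Remark \ref{r41}) sends scale $R$ to scale $R^{1/2}$, opening the induction on scales used there.

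Next I would dyadically pigeonhole over the indices $i=1,\dots,k$: in the dyadic value of $\|e^{it(-\Delta)^\alpha}f_{i,T_i}\|_{L^q(S)}$ on the inductive $R^{1/2}\times\cdots\times R^{3/4}$-subtubes $S\subset T_i$; in the per-slab counts $\sigma_{T_i}$; in $\|f_{i,T_i}\|_{L^2(\mathbb R^n)}$; and in the multiplicity $\nu_i$ with which a typical cube $Q_j$ is covered by tubes from the refined family $\mathbb T_i$. After replacing $Y$ by a subset $Y'$ with $|Y'|\thickapprox |Y|$ on which all of these are essentially constant, the inductive hypothesis of Theorem \ref{theorem 3.1} at scale $R^{1/2}$ gives the individual-$i$ bound
\[
\bigl\|e^{it(-\Delta)^\alpha}f_{i,T_i}\bigr\|_{L^q(Y_{T_i})}\lessapprox \sigma_{T_i}^{-\frac{1}{n+2}}\|f_{i,T_i}\|_{L^2(\mathbb R^n)}.
\]

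The decisive step is the multilinear Kakeya inequality of Bennett-Carbery-Tao applied to the $k$ transverse tube families $\{\mathbb T_i\}_{i=1}^k$:
\[
\int_{B^{n+1}(0,R)} \prod_{i=1}^{k}\Bigl(\sum_{T_i\in\mathbb T_i}\chi_{T_i}\Bigr)^{\frac{1}{k-1}}\lessapprox R^{\frac{n+1}{2}}\prod_{i=1}^{k}|\mathbb T_i|^{\frac{1}{k-1}},
\]
which, after Hölder's inequality, converts the trivial product $\prod_i\nu_i$ into a bound carrying the gain $|Y'|^{-\frac{k-1}{k}}\thickapprox N^{-\frac{k-1}{k}}$. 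Substituting this gain into the $k$-th root of the product of the linear bounds above, and then using that $\|e^{it(-\Delta)^\alpha}f_i\|_{L^q(Q_j)}$ is essentially constant in $j$, yields the desired exponent $N^{-\frac{k-1}{k(n+2)}}$ after taking the $q$-th root. The base case $R\lesssim 1$ is handled by short-time Strichartz as in \S\ref{s41}. The main obstacle is the bookkeeping of Step 4: one has to verify that the $k$ different dyadic-pigeonholing parameters $(\nu_i,\sigma_{T_i},|\mathbb T_i|)$ interact so that the multilinear Kakeya factor $\frac{k-1}{k}$ and the Strichartz factor $\frac{1}{n+2}$ combine to exactly the stated $\frac{k-1}{k(n+2)}$, with no loss in the counts of tubes-per-slab that would spoil the inductive closure.
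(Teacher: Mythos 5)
The paper itself does not prove Theorem~\ref{theorem 3.4}: it asserts only that the result ``can be verified via [DGLZ] and Theorem~\ref{theorem 3.1},'' i.e.\ by running the Du--Guth--Li--Zhang multilinear refined Strichartz argument with the fractional-phase-adapted linear estimate of Theorem~\ref{theorem 3.1} substituted in. Your proposal correctly identifies all of the ingredients of that argument: wave-packet decomposition, Bourgain--Demeter $l^2$-decoupling, parabolic rescaling via the $\mathcal{NPF}(L,c_0)$ normalization, dyadic pigeonholing, and Bennett--Carbery--Tao multilinear Kakeya. So in spirit you have reproduced the intended route.

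Two points fall short, however, and the first is a genuine gap you half-acknowledge. You assert that multilinear Kakeya ``converts the trivial product $\prod_i\nu_i$ into a bound carrying the gain $N^{-\frac{k-1}{k}}$,'' and that substituting this into the $k$-th root of the linear bounds ``yields the desired exponent.'' But multilinear Kakeya does not hand you a factor of $N^{-\frac{k-1}{k}}$ directly; it gives an integral (or, after discretization, combinatorial) bound of the schematic form
\[
N\cdot\Bigl(\prod_{i=1}^{k}\nu_i\Bigr)^{\frac{1}{k-1}}\lessapprox R^{\frac{n+1}{2}}\prod_{i=1}^{k}|\mathbb{T}_i|^{\frac{1}{k-1}},
\]
which must then be played against the per-tube constraints $\nu_i\lessapprox\frac{\sigma_{T_i}}{\sigma_i}|\mathbb{T}_i|$ of type \eqref{3.1c3} and the $\ell^2$-orthogonality $\sum_{T_i}\|f_{i,T_i}\|_{L^2}^2\lesssim\|f_i\|_{L^2}^2$. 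Exactly how the $\frac{1}{k-1}$ exponent from Kakeya and the $\frac{1}{n+2}$ exponent from the per-packet Strichartz estimate interact to close the induction at $N^{-\frac{k-1}{k(n+2)}}$ is the whole content of the proof; calling it ``bookkeeping'' and leaving it as the ``main obstacle'' means the proof is not actually completed. Second, the claim that ``the inductive hypothesis of Theorem~\ref{theorem 3.1} at scale $R^{1/2}$'' applies to each wave packet needs justification: after the rescaling $\Phi\mapsto\Phi_{\rho,\xi_0}$ the $k$ frequency pieces no longer sit in a common coordinate frame, so the transversality does not persist and one must indeed fall back to the linear estimate per family; you state this but do not show that the multilinear gain extracted at scale $R$ and the linear gain applied at scale $R^{1/2}$ are compatible under the pigeonholing. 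Finally, you gesture at Remark~\ref{r41} for the adaptation to the phase $|\xi|^{2\alpha}$, but do not verify that the Bennett--Carbery--Tao transversality hypothesis survives the $\mathcal{NPF}$-normalization -- this is precisely why the paper needed Theorem~\ref{theorem 3.1} in $\mathcal{NPF}$ form before citing [DGLZ].
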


\begin{proof}[Proof of Theorem \ref{theorem 2.1} - the broad case]
In the broad case, there are $\geq\frac{M}{2}$ many broad $K^2$-cubes $B$. Denote the collection of $(n+1)$-tuple of transverse caps by $\Gamma$:
$$\Gamma=\big\{\tilde{\tau}=(\tau_1,\cdot\cdot\cdot,\tau_{n+1}): \tau_j\in \mathcal{S} \ \ \& \ \ \eqref{2.1b1}~\textrm{holds}~\textrm{for}~\textrm{any}~v_j \in G(\tau_j)\big\}.$$
Then for each $B\in Y_{\text{broad}}$,
$$\left\|e^{it(-\Delta)^\alpha}f \right\|^p_{L^p(B)}
\leq K^{O(1)}\prod_{j=1}^{n+1} \left(\int_B \left|e^{it(-\Delta)^\alpha}f_{\tau_j}  \right|^p \right)^{\frac{1}{n+1}}\ \ \text{for some}\ \
 \tilde{\tau}=(\tau_1,\cdot\cdot\cdot,\tau_{n+1})\in \Gamma.
 $$

 In order to exploit the transversality and make good use of  the locally constant property, we break $B$ into small balls as follows.
 %we want to bound the above geometric average of integrals by an integral of geometric average up to a loss of $K^{O(1)}$. We can do this by using translations and the locally constant property.

 \begin{itemize}
\item[$\rhd$] We cover $B=B^{n+1}(c(B), K^2)$ by cubes $B=B^{n+1}(c(B)+v, 2)$, where $v\in B^{n+1}(0,K^2)\cap \mathbb Z^{n+1}$. By the locally constant property, we can choose
$v_j\in B^{n+1}(0,K^2)\cap \mathbb{Z}^{n+1}$ such that $\|e^{it(-\Delta)^\alpha}f_{\tau_j}\|_{L^\infty(B)}$ is attained in $B^{n+1}(c(B)+v_j,2)$, and writing
$$v_j=(x_j,t_j) \ \ \& \ \ \widehat{f_{\tau_j,v_j}}(\xi)=\widehat{f_{\tau_j}}(\xi) e^{i(x_j\cdot\xi+t_j|\xi|^{2\alpha} )},
$$
we deduce that
$$e^{it(-\Delta)^\alpha}f_{\tau_j,v_j}(x)=e^{i(t+t_j)(-\Delta)^{\alpha}}f_{\tau_j}(x+x_j)
$$
and $|e^{it(-\Delta)^\alpha}f_{\tau_j,v_j}(x)|$ reaches $\|e^{it(-\Delta)^\alpha}f_{\tau_j}\|_{L^\infty(B)}$ in $B^{n+1}(c(B),2)$. Therefore
$$\int_B \left|e^{it(-\Delta)^\alpha}f_{\tau_j}  \right|^p
\leq K^{O(1)} \int_{B^{n+1}(c(B),2)} \left|e^{it(-\Delta)^\alpha}f_{\tau_j,v_j}  \right|^p.$$

\item[$\rhd$] Now for each broad $B$, we find some
$$\tilde{\tau}=(\tau_1,\cdot\cdot\cdot,\tau_{n+1})\in\Gamma \ \ \& \ \ \tilde{v}=(v_1,\cdot\cdot\cdot,v_{n+1})$$
such that
\begin{equation}\label{2.1b2}
\left\|e^{it(-\Delta)^\alpha}f \right\|^p_{L^p(B)}
\leq K^{O(1)}\prod_{j=1}^{n+1} \left(\int_{B^{n+1}(c(B),2)} \left|e^{it(-\Delta)^\alpha}f_{\tau_j,v_j}  \right|^p \right)^{\frac{1}{n+1}}\leq K^{O(1)} \int_{B^{n+1}(c(B),2)} \prod_{j=1}^{n+1} \left|e^{it(-\Delta)^\alpha}f_{\tau_j,v_j}  \right|^{\frac{p}{n+1}}.
\end{equation}

\item[$\rhd$] Since $\#\{\tilde{\tau}\}\lesssim K^{O(1)} \ \ \& \ \ \#\{\tilde{v}\}\lesssim K^{O(1)}$,
we can choose some $\tilde{\tau}$ and $\tilde{v}$ such that \eqref{2.1b2} holds for $\geq K^{-C}M$ broad balls $B$. Next we fix $\tilde{\tau}$ and $\tilde{v}$, and let $f_{\tau_j,v_j}=f_j$. After that we further sort the collection $\mathcal{B}$ of remaining broad balls as follows:
\begin{itemize}

\item For a dyadic number $A$, let $$\mathcal{B}_A=\left\{B: B\in\mathcal{B}~\textrm{and}~\textrm{for}~\textrm{each}~B~\textrm{we}~\textrm{have}
\left\|\prod_{j=1}^{n+1} \left|e^{it(-\Delta)^\alpha}f_j  \right|^{\frac{1}{n+1}}\right\|_{L^\infty(B^{n+1}(c(B),2))}\thicksim A \right\}.
$$

\item Fix $A$, for dyadic numbers $\tilde{\lambda}_{l_1,\cdot\cdot\cdot,l_{n+1}}$, let
$\mathcal{B}_{A,\tilde{\lambda}_{l_1,\cdot\cdot\cdot,l_{n+1}}}$ consist of all $ B\in\mathcal{B}_A$ for which $R^{\frac12}$-cube $Q\supset B$ contains $\thicksim \tilde{\lambda}$ cubes from $\mathcal{B}_{A}$ and obeys $\left\|e^{it(-\Delta)^\alpha}f_j \right\|_{L^q(Q)}\thicksim l_j$ for $j=1,2,\cdot\cdot\cdot,n+1$.

\end{itemize}

\item[$\rhd$] Without loss of generality, we may assume  $\|f\|_{L^2(\mathbb{R}^n)}=1$ and we can also assume all the above dyadic numbers  are between $R^{-C}$ and $R^{C}$, where $C$ is a large constant. Therefore, there exist some dyadic numbers $A,\tilde{\lambda}_{l_1,\cdot\cdot\cdot,l_{n+1}}$ such that
$\#\mathcal{B}_{A,\tilde{\lambda}_{l_1,\cdot\cdot\cdot,l_{n+1}}}\geq K^{-C}M$. Fix $A,\tilde{\lambda}_{l_1,\cdot\cdot\cdot,l_{n+1}}$ and set $\mathcal{B}_{A,\tilde{\lambda}_{l_1,\cdot\cdot\cdot,l_{n+1}}}=\mathcal{B}$. Then, by \eqref{2.1b2} and the definition of $\mathcal{B}_{A}$, we have
\begin{align}\label{2.1b3}
\left\|e^{it(-\Delta)^\alpha}f \right\|_{L^p(Y)}
&\leq K^{O(1)} \left\|\prod_{j=1}^{n+1} \left|e^{it(-\Delta)^\alpha}f_j  \right|^{\frac{1}{n+1}}\right\|_{L^p(\cup_{B\in \mathcal{B}}B^{n+1}(c(B),2))}\\
&\leq K^{O(1)} M^{\frac{1}{p}-\frac{1}{q}}\left\|\prod_{j=1}^{n+1} \left|e^{it(-\Delta)^\alpha}f_j  \right|^{\frac{1}{n+1}}\right\|
_{L^q(\cup_{B\in \mathcal{B}}B^{n+1}(c(B),2))}\nonumber\\
&\leq K^{O(1)} M^{-\frac{1}{(n+1)(n+2)}}\left\|\prod_{j=1}^{n+1} \left|e^{it(-\Delta)^\alpha}f_j  \right|^{\frac{1}{n+1}}\right\|
_{L^q(\cup_{Q\in \mathcal{Q}}Q)},\nonumber
\end{align}
where $\mathcal{Q}=\{Q: ~\text{the}~\text{relevant}~R^{\frac{1}{2}}-\text{cubes}~Q~\text{defining}~\mathcal{B}\}$.
 %is the collection of the relevant $R^{\frac{1}{2}}$-cubes $Q$ defining $\mathcal{B}$.
 Note that
$$
\begin{cases} (\#\mathcal{Q})\lambda\geq(\#\mathcal{Q})\tilde{\lambda}
\thicksim\#\mathcal{B}\geq K^{-C}M;\\ \tilde{N}=\#\mathcal{Q}\geq \frac{K^{-C}M}{\lambda}.
\end{cases}
$$
So, by Theorem \ref{theorem 3.4}, we get
$$\left\|\prod_{j=1}^{n+1} \left|e^{it(-\Delta)^\alpha}f_j  \right|^{\frac{1}{n+1}}\right\|
_{L^q(\cup_{Q\in \mathcal{Q}}Q)}
\leq K^{O(1)} \left(\frac{M}{\lambda}\right)^{-\frac{n}{(n+1)(n+2)}} \|f\|_{L^2(\mathbb{R}^n)},$$
thereby getting via \eqref{2.1b3},
\begin{align*}
\left\|e^{it(-\Delta)^\alpha}f \right\|_{L^p(Y)}
\leq K^{O(1)} M^{-\frac{1}{(n+1)(n+2)}} K^{O(1)} \left(\frac{M}{\lambda}\right)^{-\frac{n}{(n+1)(n+2)}} \|f\|_{L^2(\mathbb{R}^n)}\leq K^{O(1)} M^{-\frac{1}{n+2}} \lambda^{\frac{n}{(n+1)(n+2)}}\|f\|_{L^2(\mathbb{R}^n)}.
\end{align*}

\item[$\rhd$] Our goal is to prove
$$\left\|e^{it(-\Delta)^\alpha}f\right\|_{L^p(Y)}
\leq C_\epsilon M^{-\frac{1}{n+1}}\gamma^{\frac{2}{(n+1)(n+2)}}\lambda^{\frac{n}{(n+1)(n+2)}}R^{\frac{\kappa}{(n+1)(n+2)}+\epsilon}\|f\|_{L^2(\mathbb{R}^n)}.$$
So it remains to verify
\begin{equation}\label{2.1b4}
M^{-\frac{1}{n+2}} \lambda^{\frac{n}{(n+1)(n+2)}}
\leq K^{O(1)} M^{-\frac{1}{n+1}} \gamma^{\frac{2}{(n+1)(n+2)}}\lambda^{\frac{n}{(n+1)(n+2)}}R^{\frac{\kappa}{(n+1)(n+2)}+\epsilon} - \text{i.e.} -
M\leq K^{O(1)} \gamma^2 R^\kappa.
\end{equation}
However, the second equivalent inequality of \eqref{2.1b4} follows from the definition  \eqref{2.1a1} of $\gamma$ which ensures
$
M\leq \gamma R^\kappa \ \ \& \ \ \gamma\geq K^{-2\kappa}.
$
\end{itemize}
\end{proof}

\subsubsection{The narrow case.}
In order to prove the narrow case of Theorem \ref{theorem 2.1}, we have the following lemma which is essentially contained in Bourgain-Demeter \cite{BD}.

\begin{lemma}\label{lemma 3.6}
Suppose that:
\begin{itemize}
\item[\rm (i)] $B$ is a narrow $K^2$-cube in $\mathbb{R}^{n+1}$ and takes $c(B)$ as its center;

\item[\rm (ii)] $\mathcal{S}$ denotes the set of $K^{-1}$-cubes which tile $\mathbb B^n$;

\item[\rm (iii)] $\omega_B$ is a weight function which is essentially a characteristic function on $B$ - more precisely -
$$\text{supp}\widehat{\omega_B}\subset B(0,K^{-2}) \ \ \& \ \ \chi_B(\tilde{x})\lesssim \omega_B(\tilde{x})\leq \left( 1+\frac{|\tilde{x}-c(B)|}{K^2}\right)^{-1000n}.$$
\end{itemize}
Then
$$\left\|e^{it(-\Delta)^\alpha}f \right\|_{L^p(B)}
\leq C_\epsilon K^\epsilon \left( \sum_{\tau\in \mathcal{S}} \left\|e^{it(-\Delta)^\alpha}f_\tau \right\|^2_{L^p(\omega_B)} \right)^{\frac{1}{2}}\ \ \forall\ \ \epsilon>0.
$$
\end{lemma}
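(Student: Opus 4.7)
The plan is to obtain Lemma \ref{lemma 3.6} as a direct consequence of Bourgain-Demeter's $l^2$-decoupling (Lemma \ref{Decoupling}), applied to the Fourier extension operator associated with the hypersurface $\Sigma = \{(\xi, |\xi|^{2\alpha}) : \xi \in \mathbb B^n\}$ at the scale $\sigma = K^{-2}$, so that $\sigma^{1/2} = K^{-1}$ matches the side-length of the caps $\tau \in \mathcal S$. The weight $\omega_B$, whose Fourier transform is supported in $B(0, K^{-2})$, plays the standard role of converting the sharp cutoff $\chi_B$ into a smooth cutoff compatible with this Fourier-side geometry.

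First I would multiply each extension $e^{it(-\Delta)^\alpha} f_\tau$ by $\omega_B^{1/p}$ and observe that the space-time Fourier transform of the product is essentially concentrated in a $K^{-2}$-neighborhood of the piece of $\Sigma$ sitting over $\tau$, i.e.\ in a slab of size $K^{-1}\times\cdots\times K^{-1}\times K^{-2}$, exactly the type of rectangle for which Lemma \ref{Decoupling} is calibrated. In order to apply Lemma \ref{Decoupling} to $\Sigma$ (which is not the paraboloid), I would invoke the normalization from Remark \ref{r41}: on each cap $\tau$ centered at $\xi_0$ with $|\xi_0|\gtrsim 1$, the affine change of variables $\Phi \mapsto \Phi_{\rho,\xi_0}$ with $\rho \sim K^{-1}$ pushes $|\xi|^{2\alpha}$ into the class $\mathcal{NPF}(L, c_0)$, which is a $C^L$-small perturbation of $|\xi|^2/2$. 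Bourgain-Demeter decoupling is valid for any such perturbation, and the rescaling respects $L^p$-norms (again as in Remark \ref{r41}); hence the decoupling bound transfers back to $\Sigma$. Combining with $\chi_B \lesssim \omega_B$ on the left-hand side then yields the inequality of the lemma, with the right-hand side appearing as the announced $l^2$-sum with weight $\omega_B$.

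The main technical obstacle is the behaviour of $|\xi|^{2\alpha}$ near the origin: for $\frac12 < \alpha < 1$ the Hessian of $|\xi|^{2\alpha}$ degenerates at $\xi = 0$, so the rescaling of Remark \ref{r41} does not uniformly land in $\mathcal{NPF}(L, c_0)$ on the cap containing the origin. I would handle this single exceptional cap separately by a dyadic decomposition into annuli $|\xi| \sim 2^{-j}$, rescaling each to a unit annulus and applying the same decoupling argument there, then summing via $L^2$-orthogonality of frequency-disjoint pieces; this costs only a logarithmic factor, absorbed into the $K^\epsilon$ loss. A secondary, purely bookkeeping, issue is propagating the polynomial Schwartz-tail of $\omega_B$ through the decoupling step, which is standard and again contributes at most $K^\epsilon$. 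Together these give the bound claimed in Lemma \ref{lemma 3.6}.
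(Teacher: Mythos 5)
The paper gives no proof of Lemma \ref{lemma 3.6} --- it is attributed directly to Bourgain--Demeter --- so there is nothing in the text to compare against. Your sketch follows the expected route (multiply by a weight adapted to $B$, locate the Fourier support of the product in a $K^{-2}$-neighborhood of $\{(\xi,|\xi|^{2\alpha})\}$, apply $l^2$-decoupling at scale $\sigma=K^{-2}$), but two steps, as you describe them, have genuine gaps. Multiplying by $\omega_B^{1/p}$ does not place the Fourier transform of the product in a $K^{-2}$-neighborhood of the surface: taking the $p$-th root of $\omega_B$ destroys its compact Fourier support, and $\widehat{\omega_B^{1/p}}$ is not supported in $B(0,K^{-2})$. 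The standard fix is to introduce a separate auxiliary bump $\eta_B$ with $\widehat{\eta_B}$ supported in $B(0,K^{-2})$, $|\eta_B|\gtrsim 1$ on $B$, and $|\eta_B|^p\lesssim\omega_B$ (such $\eta_B$ exist because $p=\frac{2(n+1)}{n-1}>2$ and $\omega_B$ decays at order $1000n$); one applies decoupling to $(e^{it(-\Delta)^\alpha}f)\eta_B$ and converts each term on the right-hand side using $|\eta_B|^p\lesssim\omega_B$.

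The treatment of the region near $\xi=0$ is misdescribed and, in fact, avoidable. For $\frac12<\alpha<1$ the Hessian of $|\xi|^{2\alpha}$ does not degenerate near the origin --- it blows up like $|\xi|^{2\alpha-2}$; degeneracy in the vanishing-curvature sense occurs only for $\alpha>1$. Either way uniform ellipticity fails on the full $\mathbb B^n$, so the exceptional region must be handled. However, ``summing via $L^2$-orthogonality of frequency-disjoint pieces'' does not control $L^p$-norms for $p>2$; the correct device is Cauchy--Schwarz over the $\sim\log K$ dyadic annuli $|\xi|\thicksim 2^{-j}$, which loses $(\log K)^{1/2}\lesssim K^\epsilon$. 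More to the point, the paper's chain of reductions (Theorem \ref{theorem 1.5} and the narrow-case rescaling in \S\ref{s42}) operates under the standing assumption $|\xi|\thicksim 1$ --- see the repeated remark ``$|H|\thicksim 1$ since $|\xi|\thicksim 1$'' in Remark \ref{r41} and in step 6 of the narrow case --- so the $\xi=0$ pathology never arises where Lemma \ref{lemma 3.6} is actually applied, and the dyadic workaround, though salvageable with the Cauchy--Schwarz fix, is unnecessary in context.
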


\begin{proof}[Proof of Theorem \ref{theorem 2.1} - the narrow case]
The main method we used is the parabolic rescaling and induction on radius. Next we prove the narrow case step by step.
%For each $\tau\in \mathcal{S}$, we will deal with $e^{it(-\Delta)^\alpha}f_\tau$ by the parabolic rescaling and induction on radius. In order to do so, we need to further decompose $f$ in physical space and perform dyadic pigeonholing several times to get the right setup for our inductive hypothesis at scale $R_1=\frac{R}{K^2}$ after rescaling.
	\begin{itemize}
\item [$\rhd$] 1stly, we consider the wave packet decomposition which is similar to Theorem \ref{theorem 3.1} but with different scale.
%Here the scale of wave packet decomposition is different from Theorem \ref{theorem 3.1}.
We break the physical ball $B^n(0,R)$ into $\frac{R}{K}$-cubes $D$. From \cite{T}, we have
$$f=\sum_{\tau, D} f_{T_{\tau,D}}\ \  \& \ \ f_{T_{\tau,D}}=(\hat f \chi_{\tau})^{\vee}\chi_D.$$
%For each pair $(\tau,D)$, let
%$$f_{T_{\tau,D}}=(\hat{f}\chi_\tau)^{\vee}\chi_D.$$
By computation, we have
$e^{it(-\Delta)^\alpha}f_{T_{\tau,D}}$ (whenever restricted to $B^{n+1}(0,R)$) is essentially supported on an $\frac{R}{K}\times\cdot\cdot\cdot\times\frac{R}{K}\times R$-box, denoted by
%with the direction $(2\alpha|c(\tau)|^{2\alpha-2} c(\tau),1)$ determined by $\tau$ and intersecting $\{t=0\}$ at an $\frac{R}{K}$-ball centered at $c(D)$.
$$T_{\tau,D}=\left\{(x,t):(x,t)\in B^{n+1}(0,R)\ \ \& \ \  \left|x-c(D)-2t\alpha|c(\tau)|^{2\alpha-2}c(\tau) \right|\leq \frac{R}{K}\ \ \& \ \  0<t<R\right\}.$$
Here $c(\tau) \& c(D)$ denote the centers of $\tau \& D$ respectively.
For a fixed $\tau$, the different tubes $T_{\tau,D}$ tile $B^{n+1}(0,R)$.
% In particular, for each $\tau$, a given $K^2$-cube $B$ lies in exactly one box $T_{\tau,D}$.
Next we write
$f=\sum_T f_T$
for convenience.
Therefore, by decoupling theorem, for each narrow $K^2$-cube $B$, we have
\begin{align}\label{2.1n1}
\left\|e^{it(-\Delta)^\alpha}f\right\|_{L^p(B)}
\lesssim K^{\epsilon^4}\left(\sum_T \left\|e^{it(-\Delta)^\alpha}f_T \right\|^2_{L^p(\omega_B)}\right)^{\frac{1}{2}}.
\end{align}
The reason to take $K^{\epsilon^4}$ in \eqref{2.1n1} is that there is a $\frac{1}{K^{2\epsilon}}$ satisfying $\frac{ K^{3\epsilon^4}}{ K^{2\epsilon}}\ll 1$ at the end of the proof.

\item[$\rhd$] 2ndly, we perform a dyadic pigeonholing to get our inductive hypothesis for each $f_T$. Note that
$$
\begin{cases}
K=R^\delta=R^{\epsilon^{100}};\\
R_1=\frac{R}{K^2}=R^{1-2\delta};\\
K_1=R_1^\delta=R^{\delta-2\delta^2}.\\
\end{cases}
$$
So, not only tiling the box $T$ by $KK_1^2\times\cdot\cdot\cdot\times KK_1^2\times K^2K_1^2$-tubes $S$, but also tiling the box $T$ by
$R^{\frac{1}{2}}\times\cdot\cdot\cdot\times R^{\frac{1}{2}}\times KR^{\frac{1}{2}}$-tubes $S'$ which are running parallel to the long axis of box $T$, we utilize the parabolic rescaling to reveal that the box $T$ becomes an $R_1$-cube as well as the tubes $S'$ and $S$ become lattice $R_1^{\frac{1}{2}}$-cubes and $K_1^2$-cubes respectively. See 7thly for more details.

\item[$\rhd$] 3rdly, we classify the tubes $S$ and $S'$ inside each $T$ as follows.

\begin{itemize}

\item For dyadic numbers $\eta, \beta_1$, let
$\mathbb{S}_{T,\eta,\beta_1}=\big\{S: S\subset T ~\text{each} ~\text{of}~\text{which}~\text{contains}~\thicksim \eta~\text{narrow}~K^2-\text{cubes}~\text{in}~Y_{\text{narrow}}~\text{and}~\|e^{it(-\Delta)^\alpha}f_T\|_{L^p(S)}\thicksim\beta_1\big\}$.

\item Fix $\eta, \beta_1$, and
for dyadic number $\lambda_1$, let
$\mathbb{S}_{T,\eta,\beta_1,\lambda_1}=\big\{S: S\in \mathbb{S}_{T,\eta,\beta_1} ~\text{and}~\text{the}~\text{tube}~S'\supset S~\text{contains}~\thicksim \lambda_1~\text{tubes}~\text{from}~\mathbb{S}_{T,\eta,\beta_1}\big\}$.
%$\mathbb{S}_{T,\eta,\beta_1,\lambda_1}$ be the subcollection of  $\mathbb{S}_{T,\eta,\beta_1}$ such that for each
%$S\in\mathbb{S}_{T,\eta,\beta_1,\lambda_1}$, the tube $S'\supset S$ contains $\thicksim\lambda_1$ tubes from $\mathbb{S}_{T,\eta,\beta_1}$.

\item For the fixed $\eta, \beta_1, \lambda_1$, we sort the boxes $T$. For dyadic numbers $\beta_2,M_1,\gamma_1$, let
$\mathbb{B}_{\eta,\beta_1,\lambda_1,\beta_2,M_1,\gamma_1}$ denote the collection of boxes $T$ each of which satisfyies
$$\|f_T\|_{L^2(\mathbb{R}^n)}\thicksim\beta_2 \ \ \&  \ \ \#\mathbb{S}_{T,\eta,\beta_1,\lambda_1}\thicksim M_1$$
and
\begin{align}\label{2.1n1r}
\max_{T_r\subset T:r\geq  K_1^2}\frac{\#\{S: S\in\mathbb{S}_{T,\eta,\beta_1,\lambda_1}\ \ \&  \ \ S\subset T_r\}}{r^\kappa}\thicksim\gamma_1,
\end{align}
where $T_r$ are $Kr\times\cdot\cdot\cdot\times Kr\times K^2r$-tubes in $T$ which are  parallel to the long axis of $T$.
\end{itemize}

\item[$\rhd$] 4thly, let
$$Y_{T,\eta,\beta_1,\lambda_1}=\cup_{S\in \mathbb{S}_{T,\eta,\beta_1,\lambda_1}} S.$$
%and $\chi_{Y_{T,\eta,\beta_1,\lambda_1}}$ be the corresponding characteristic function.
Then, for $Y_{\textrm{narrow}}$ we can write
\begin{align*}
e^{it(-\Delta)^\alpha}f
=\Sigma_{\eta,\beta_1,\lambda_1,\beta_2,M_1,\gamma_1}
\left(\sum_{T\in\mathbb{B}_{\eta,\beta_1,\lambda_1,\beta_2,M_1,\gamma_1} }  e^{it(-\Delta)^\alpha}f_T \cdot\chi_{Y_{T,\eta,\beta_1,\lambda_1}} \right)
+O(R^{-1000n})\|f\|_{L^2(\mathbb{R}^n)}.
\end{align*}
The error term $O(R^{-1000n})\|f\|_{L^2(\mathbb{R}^n)}$ can be neglected.
\begin{itemize}
\item In particular, on each narrow $B$ we have
\begin{align}\label{2.1n2}
e^{it(-\Delta)^\alpha}f
=\Sigma_{\eta,\beta_1,\lambda_1,\beta_2,M_1,\gamma_1}
\left(\sum_{\substack{T\in\mathbb{B}_{\eta,\beta_1,\lambda_1,\beta_2,M_1,\gamma_1}\\B\subset Y_{T,\eta,\beta_1,\lambda_1 } }} e^{it(-\Delta)^\alpha}f_T \right)
.
\end{align}

\item Without loss of generality, we assume $$
\begin{cases}
\|f\|_{L^2(\mathbb{R}^n)}=1;\\
1\leq\eta\leq K^{O(1)},R^{-10n}\leq\beta_1\leq K^{O(1)},1\leq\lambda_1\leq R^{O(1)};\\
R^{-10n}\leq\beta_2\leq 1,1\leq M_1\leq R^{O(1)},K^{-2n}\leq\gamma_1\leq R^{O(1)}.
\end{cases}
$$
%where $C$ is a large constant in order to guarantee the contributions from those $\beta_1$ and $\beta_2$ less than $R^{-C}$ are negligible.
Therefore, there are only $O(\log R)$  significant choices for each dyadic number.

\item By \eqref{2.1n2}, the pigeonholing and \eqref{2.1n1}, we can choose
$\eta,\beta_1,\lambda_1,\beta_2,M_1,\gamma_1 $ such that

\begin{align}\label{2.1n4}
\left\|e^{it(-\Delta)^\alpha}f\right\|_{L^p(B)}
\lesssim(\log R)^6 K^{\epsilon^4}
\left(\sum_{\substack{T\in\mathbb{B}_{\eta,\beta_1,\lambda_1,\beta_2,M_1,\gamma_1}\\B\subset Y_{T,\eta,\beta_1,\lambda_1 } }}
\left\|e^{it(-\Delta)^\alpha}f_T\right\|^2_{L^p(\omega_B)} \right)^{\frac{1}{2}}
\end{align}
holds for $\gtrsim(\log R)^{-6}$ narrow $K^2$-cubes $B$.

\end{itemize}
\item[$\rhd$] 5thly, we fix $\eta,\beta_1,\lambda_1,\beta_2,M_1,\gamma_1$ for the rest of the proof. Let
$$Y_{T,\eta,\beta_1,\lambda_1}=Y_T~\&~\mathbb{B}_{\eta,\beta_1,\lambda_1,\beta_2,M_1,\gamma_1}=\mathbb{B}.
$$
%then we can sort out the narrow balls $B$ satisfying \eqref{2.1n3} by $$\#\{T: T\in \mathbb{B}\ \ \&  \ \ B\subset Y_T\}.$$
Let $Y'\subset Y_{\textrm{narrow}}$ be a union of narrow $K^2$-cubes $B$ each of which obeys \eqref{2.1n4}

%\begin{align}\label{2.1n4}
%\left\|e^{it(-\Delta)^\alpha}f\right\|_{L^p(B)}
%&\lesssim(\log R)^6 K^{\epsilon^4}
%\left(\sum_{T\in\mathbb{B},B\subset Y_T}
%\left\|e^{it(-\Delta)^\alpha}f_T\right\|^2_{L^p(\omega_B)} \right)^{\frac{1}{2}}
%\end{align}
and
\begin{align}\label{2.1n5}
\begin{cases}\#\{T: T\in \mathbb{B}\ \ \&  \ \ B\subset Y_T\}\thicksim\nu
\ \ \text{
for some dyadic number}\ \ 1\leq\nu\leq K^{O(1)};\\
\#\{B: B\subset Y' \ \ \& \ \ B~\textrm{are}~K^2-\textrm{cubes}\}\gtrsim(\log R)^{-7}M.
\end{cases}
\end{align}
By our assumption that
$\|e^{it(-\Delta)^\alpha}f\|_{L^p(B_k)}$ is essentially constant in $k=1,2,\cdot\cdot\cdot,M$, in the narrow case we have

\begin{align}\label{2.1n6}
\left\|e^{it(-\Delta)^\alpha}f\right\|^p_{L^p(Y)}
&\lesssim(\log R)^7
\sum_{B\subset Y'}
\left\|e^{it(-\Delta)^\alpha}f\right\|^p_{L^p(B)}.
\end{align}
For each $B\subset Y'$, it follows from \eqref{2.1n4}, H\"{o}lder's inequality and \eqref{2.1n5} that

\begin{align}\label{2.1n7}
\left\|e^{it(-\Delta)^\alpha}f\right\|^p_{L^p(B)}
&\lesssim(\log R)^{6p} K^{\epsilon^4p}
\left(   \sum_{T\in \mathbb{B}: B\subset Y_T}
\left\|e^{it(-\Delta)^\alpha}f_T\right\|^2_{L^p(\omega_B)} \right)^{\frac{p}{2}}\\
&\lesssim(\log R)^{6p} K^{\epsilon^4p} \nu^{\frac{p}{2}-1}
\sum_{T\in \mathbb{B}: B\subset Y_T}
\left\|e^{it(-\Delta)^\alpha}f_T\right\|^p_{L^p(\omega_B)}.\nonumber
\end{align}
Via \eqref{2.1n6} and \eqref{2.1n7},
% and as omitting the rapidly decaying tails as before,
we have
\begin{align}\label{2.1n8}
\left\|e^{it(-\Delta)^\alpha}f\right\|_{L^p(Y)}
&\lesssim(\log R)^{\frac{7}{p}}
\left(\sum_{B\subset Y'}
\left\|e^{it(-\Delta)^\alpha}f\right\|^p_{L^p(B)}\right)^{\frac{1}{p}}\\\nonumber
&\lesssim(\log R)^{\frac{7}{p}}
\left(\sum_{B\subset Y'}(\log R)^{6p} K^{\epsilon^4p} \nu^{\frac{p}{2}-1}\sum_{T\in \mathbb{B}: B\subset Y_T}
\left\|e^{it(-\Delta)^\alpha}f_T\right\|^p_{L^p(\omega_B)}
\right)^{\frac{1}{p}}\\\nonumber
&\lesssim(\log R)^{13} K^{\epsilon^4} \nu^{\frac{1}{n+1}}
\left(\sum_{B\subset Y'} \sum_{T\in \mathbb{B}: B\subset Y_T}
\left\|e^{it(-\Delta)^\alpha}f_T\right\|^p_{L^p(\omega_B)}\right)^{\frac{1}{p}}\\\nonumber
&\lesssim(\log R)^{13} K^{\epsilon^4} \nu^{\frac{1}{n+1}}
\left(\sum_{T\in \mathbb{B}}
\left\|e^{it(-\Delta)^\alpha}f_T\right\|^p_{L^p(Y_T)}\right)^{\frac{1}{p}}.\nonumber
\end{align}

\item[$\rhd$] 6thly, regarding each $\|e^{it(-\Delta)^\alpha}f_T\|_{L^p(Y_T)}$, we apply the parabolic rescaling and induction on radius. For each $K^{-1}$-cube $\tau=\tau_T$ in $\mathbb B^n$, we write $\xi=\xi_0+K^{-1}\eta\in\tau$, where $\xi_0=c(\tau)$.
Similarly to the argument of \eqref{3.1b1}, we also consider a collection of the normalized phase functions
$$\mathcal{NPF}(L,c_0)=\left\{\Phi\in C_0^\infty(B^n(0,2)):\left \|\Phi(\xi)-\frac{|\xi|^2}{2}\right\|_{C^L(\mathbb B^n)}\leq  c_0\right\}.$$
Via the similar parabolic rescaling,
\begin{equation*}
\left\{
\begin{aligned}
 &\tilde{x}=K^{-1} H^{-t}( x+t\nabla\Phi(\xi_0));\\
&\tilde{t}=K^{-2}t,\\
\end{aligned}
\right.
\end{equation*}
we reach
\begin{align}\label{2.1n9}
\|e^{it\Phi}f_T(x)\|_{L^p(Y_T)}
%=K^{-1\left(\frac{n}{2}-\frac{n+2}{p}\right)}|H|^{\frac{1}{p}-\frac{1}{2}}\|e^{i\tilde{t}\Phi_{K^{-1},\xi_0}}g(\tilde{x})\|_{L^p(\tilde{Y})}
=K^{-\frac{1}{n+1}}|H|^{-\frac{1}{n+1}}\|e^{i\tilde{t}\Phi_{K^{-1},\xi_0}}g(\tilde{x})\|_{L^p(\tilde{Y})}
\thicksim K^{-\frac{1}{n+1}} \|e^{i\tilde{t}\Phi_{K^{-1},\xi_0}}g(\tilde{x})\|_{L^p(\tilde{Y})},
\end{align}
where
\begin{equation*}
\left\{
\begin{aligned}
&|H|\thicksim 1~(\text{since}~ |\xi|\thicksim 1);\\
&\text{supp} \hat{g}\subset \mathbb B^n;\\
&\|g\|_{L^2(\mathbb{R}^n)}=\|f_T\|_{L^2(\mathbb{R}^n)},\\
\end{aligned}
\right.
\end{equation*}
as well as $\tilde{Y}$ is the image of $Y_T$ under the new coordinates and $\Phi_{K^{-1},\xi_0}$ is similar to \eqref{NPF}.

\item[$\rhd$] 7thly, we apply inductive hypothesis \eqref{2.1a2} (replacing $(-\Delta)^{\alpha}$ with $\Phi$ ) at scale $R_1=\frac{R}{K^2}$ to $\|e^{i\tilde{t}(-\Delta)^{\alpha}}g(\tilde{x})\|_{L^p(\tilde{Y})}$ with $M_1,\gamma_1,\lambda_1,R_1$. Under parabolic rescaling, the relation between preimage and image is as follows:
\begin{equation*}
\left\{
\begin{aligned}
&T~\left(\frac{R}{K}\times\cdot\cdot\cdot\times\frac{R}{K}\times R -\text{tube}\right) \longrightarrow \tilde{T}~\left(R_1-\text{cube}\right);\\
&S'~\left(R^{\frac{1}{2}}\times\cdot\cdot\cdot\times R^{\frac{1}{2}}\times KR^{\frac{1}{2}}-\text{tube}\right)\longrightarrow \tilde{S'}~\left(R_1^{\frac12}-\text{cube}\right);\\
&S~\left(KK_1^2\times\cdot\cdot\cdot\times KK_1^2\times K^2K_1^2-\text{tube}\right)\longrightarrow \tilde{S}~\left(K_1^2-\text{cube}\right).\\
\end{aligned}
\right.
\end{equation*}
More precisely, we have
$$\#\{\tilde{S}: \tilde{S}\subset \tilde{T}\ \ \& \ \ \tilde{S}\subset \tilde{Y} \}\thicksim M_1$$
and the $K_1^2$-cubes $\tilde{S}$ are organized into $R_1^{\frac{1}{2}}$-cubes $\tilde{S'}$ such that
$$\#\{\tilde{S}: \tilde{S} \subset \tilde{S'}\}\thicksim \lambda_1.$$
%$\tilde{Y}=F(Y_T)$ consists of $\thicksim M_1$ distinct $K_1^2$-cubes $F(S)$ in an $R_1$-ball $F(T)$, and the $K_1^2$-cubes $F(S)$ are organized into $R_1^{\frac{1}{2}}$-cubes $F(S')$ such that each cube $F(S')$ contains $\thicksim\lambda_1$ cubes $F(S)$.
Moreover, $\|e^{i\tilde{t}(-\Delta)^{\alpha}}g(\tilde{x})\|_{L^p(\tilde{S})}$ is dyadically a constant in $S\subset Y_T$. By our choice of $\gamma_1$, we have
$$\max_{\substack{B^{n+1}(x',r)\subset \tilde{T}\\x'\in \mathbb{R}^{n+1},r\geq  K_1^2}}
\frac{\#\{\tilde{S}: \tilde{S}\subset B^{n+1}(x',r)\}}{r^\kappa}\thicksim\gamma_1.$$
Hence, by the
inductive hypothesis \eqref{2.1a2} (replacing $(-\Delta)^{\alpha}$ with $\Phi$) at scale $R_1$, we have
\begin{equation*}\label{2.1n10.}
\|e^{i\tilde{t}\Phi_{K^{-1},\xi_0}}g(\tilde{x})\|_{L^p(\tilde{Y})}
\lesssim
M_1^{-\frac{1}{n+1}}\gamma_1^{\frac{2}{(n+1)(n+2)}}\lambda_1^{\frac{n}{(n+1)(n+2)}}\left(\frac{R}{K^2}\right)^{\frac{\kappa}{(n+1)(n+2)}+\epsilon}
\|g\|_{L^2(\mathbb{R}^n)}.
\end{equation*}
 By \eqref{2.1n9} and $\|g\|_{L^2(\mathbb{R}^n)}=\|f_T\|_{L^2(\mathbb{R}^n)}$, we get
\begin{align}\label{2.1n10..}
\|e^{it\Phi}f_T(x)\|_{L^p(Y_T)}
&\lesssim K^{-\frac{1}{n+1}}
M_1^{-\frac{1}{n+1}}\gamma_1^{\frac{2}{(n+1)(n+2)}}\lambda_1^{\frac{n}{(n+1)(n+2)}}\left(\frac{R}{K^2}\right)^{\frac{\kappa}{(n+1)(n+2)}+\epsilon}
\|f_T\|_{L^2(\mathbb{R}^n)}.
\end{align}
Since \eqref{2.1n10..} also holds whenever replacing $\Phi$ with $(-\Delta)^{\alpha}$, we get
\begin{align}\label{2.1n10}
\|e^{it(-\Delta)^\alpha}f_T(x)\|_{L^p(Y_T)}
&\lesssim K^{-\frac{1}{n+1}}
M_1^{-\frac{1}{n+1}}\gamma_1^{\frac{2}{(n+1)(n+2)}}\lambda_1^{\frac{n}{(n+1)(n+2)}}\left(\frac{R}{K^2}\right)^{\frac{\kappa}{(n+1)(n+2)}+\epsilon}
\|f_T\|_{L^2(\mathbb{R}^n)}.
\end{align}
By \eqref{2.1n8} and \eqref{2.1n10}, we obtain
\begin{align}\label{2.1n11}
\|e^{it(-\Delta)^\alpha}f\|_{L^p(Y)}
&\lesssim(\log R)^{13} K^{\epsilon^4} \nu^{\frac{1}{n+1}}
\left(\sum_{T\in \mathbb{B}}
  \left(K^{-\frac{1}{n+1}}
M_1^{-\frac{1}{n+1}}\gamma_1^{\frac{2}{(n+1)(n+2)}}\lambda_1^{\frac{n}{(n+1)(n+2)}}\left(\frac{R}{K^2}\right)^{\frac{\kappa}{(n+1)(n+2)}+\epsilon}
\|f_T\|_{L^2(\mathbb{R}^n)}\right)^p\right)^{\frac{1}{p}}\\\nonumber
&\lesssim K^{2\epsilon^4} \nu^{\frac{1}{n+1}} K^{-\frac{1}{n+1}}
M_1^{-\frac{1}{n+1}}\gamma_1^{\frac{2}{(n+1)(n+2)}}\lambda_1^{\frac{n}{(n+1)(n+2)}}\left(\frac{R}{K^2}\right)^{\frac{\kappa}{(n+1)(n+2)}+\epsilon}
\left(\sum_{T\in \mathbb{B}}\|f_T\|^p_{L^2(\mathbb{R}^n)}\right)^{\frac{1}{p}}\\
&\lesssim K^{2\epsilon^4} \left(\frac{\nu}{\# \mathbb{B}}\right)^{\frac{1}{n+1}} K^{-\frac{1}{n+1}}
M_1^{-\frac{1}{n+1}}\gamma_1^{\frac{2}{(n+1)(n+2)}}\lambda_1^{\frac{n}{(n+1)(n+2)}}\left(\frac{R}{K^2}\right)^{\frac{\kappa}{(n+1)(n+2)}+\epsilon}
\|f\|_{L^2(\mathbb{R}^n)},\nonumber
\end{align}
where the third inequality follows from the assumption that
$\|f_T\|_{L^2(\mathbb{R}^n)}$ is essentially constant in $T\in \mathbb{B}$ and
then implies
$$\left(\sum_{T\in \mathbb{B}}\|f_T\|^p_{L^2(\mathbb{R}^n)}\right)^{\frac{1}{p}}
\leq \left(\frac{1}{\# \mathbb{B}}\right)^{\frac{1}{n+1}} \left(\sum_{T}\|f_T\|^2_{L^2(\mathbb{R}^n)}\right)^{\frac12}
\lesssim\left(\frac{1}{\# \mathbb{B}}\right)^{\frac{1}{n+1}} \|f\|_{L^2(\mathbb{R}^n)}.$$

\item[$\rhd$] 8thly, we consider the lower bound and the upper bound of
$$\#\{(T,B):T\in \mathbb{B} \ \ \& \ \ B\subset Y_T\cap Y'\}.$$
\begin{itemize}

\item On the one hand, by the definition of $\nu$ as in \eqref{2.1n5}, there is a lower bound
$$\#\{(T,B):T\in \mathbb{B}\ \ \& \ \ B\subset Y_T\cap Y'\}\gtrsim (\log R)^{-7} M\nu.$$

\item On the other hand, by our choices of $M_1$ and $\eta$, for each $T\in \mathbb{B}$,
\begin{equation*}
\left\{
\begin{aligned}
&\#\{S: S\subset Y_T \}\thicksim M_1;\\
&\#\{B: B\subset S  \ \ \& \ \ B\subset  Y_{\text{narrow}}\}\thicksim \eta.\\
\end{aligned}
\right.
\end{equation*}
%$Y_T$ contains $\thicksim M_1$ tubes $S$ and each $S$ contains $\thicksim\eta$ narrow cubes $B$ in $Y_{\textrm{narrow}}$,
so
$$\#\{(T,B):T\in \mathbb{B}\ \ \& \ \ B\subset Y_T\cap Y'\}\lesssim(\# \mathbb{B})M_1 \eta.$$
\end{itemize}
Therefore, we get
\begin{align}\label{2.1n11v}
\frac{\nu}{\# \mathbb{B}}\lesssim \frac{(\log R)^7 M_1 \eta}{M}.
\end{align}

\item[$\rhd$] 9thly, we want to obtain the relation between $\gamma$ and $\gamma_1$. By our choices of $\gamma_1$ as in \eqref{2.1n1r} and $\eta$,
\begin{align*}\label{2.1n12}
\gamma_1 \cdot\eta
&\thicksim \max_{T_r\subset T:r\geq  K_1^2}
\frac{\#\{S: S\subset Y_T\cap T_r\}}{r^\kappa}\cdot\#\{B: B\subset S\cap Y_{\textrm{narrow}}~\textrm{for}~\textrm{any}~\textrm{fixed}~S\subset Y_T\}\\
&\lesssim \max_{T_r\subset T:r\geq  K_1^2}
\frac{\#\{B: B\subset Y\ \ \& \ \  B\subset T_r\}}{r^\kappa}\\
&\leq\frac{K\gamma(Kr)^\kappa}{r^\kappa}\nonumber\\
&=\gamma K^{\kappa+1}.
\end{align*}
Hence,
\begin{equation}\label{2.1n13}
\eta\lesssim\frac{\gamma K^{\kappa+1}}{\gamma_1}.
\end{equation}

\item[$\rhd$] 10thly, we complete the proof of Theorem \ref{theorem 2.1}.
\begin{itemize}
	
\item On the one hand,
\begin{equation*}
\left\{
\begin{aligned}
&\#\{S: S\subset S'  \ \ \& \ \ S\subset Y_T \}\thicksim \lambda_1;\\
&\#\{B: B\subset S  \ \ \& \ \ B\subset  Y_{\text{narrow}}\}\thicksim \eta.\\
\end{aligned}
\right.
\end{equation*}
%by considering the number of narrow $K^2$-balls in each relevant $R^{\frac{1}{2}}\times\cdot\cdot\cdot\times R^{\frac{1}{2}}\times KR^{\frac{1}{2}}$-tube $S'$.
%each $S'$ contains $\thicksim\lambda_1$ tubes $S$ in $Y_T$ and each such $S$ contains $\thicksim\eta$ narrow balls.

\item On the other hand,  we can cover $S'$ by $\thicksim K$ finitely overlapping $R^{\frac{1}{2}}$-balls and each $R^{\frac{1}{2}}$-ball contains $\lesssim\lambda$ many $K^2$-cubes in $Y$.
\end{itemize}

Thus it follows that
\begin{align}\label{2.1n14}
\lambda_1
\lesssim \frac{K\lambda}{\eta}.
\end{align}
Inserting \eqref{2.1n11v}, \eqref{2.1n14}  and \eqref{2.1n13} into \eqref{2.1n11} gives
\begin{align*}\label{2.1n15}
\|e^{it(-\Delta)^\alpha}f\|_{L^p(Y)}
&\lesssim K^{2\epsilon^4} \left(\frac{(\log R)^7 M_1 \eta}{M}\right)^{\frac{1}{n+1}} K^{-\frac{1}{n+1}}
M_1^{-\frac{1}{n+1}}\gamma_1^{\frac{2}{(n+1)(n+2)}}\left( \frac{K\lambda}{\eta}\right)^{\frac{n}{(n+1)(n+2)}}\left(\frac{R}{K^2}\right)^{\frac{\kappa}{(n+1)(n+2)}+\epsilon}
\|f\|_{L^2(\mathbb{R}^n)}\\\nonumber
&\lesssim \frac{ K^{3\epsilon^4}}{ K^{2\epsilon}}  \left(\frac{\eta\gamma_1}{K^{\kappa+1}}\right)^{\frac{2}{(n+1)(n+2)}}
 M^{-\frac{1}{n+1}}  \lambda^{\frac{n}{(n+1)(n+2)}}  {R}^{\frac{\kappa}{(n+1)(n+2)}+\epsilon} \|f\|_{L^2(\mathbb{R}^n)}\\
&\lesssim  \frac{ K^{3\epsilon^4}}{ K^{2\epsilon}}  M^{-\frac{1}{n+1}} \gamma^{\frac{2}{(n+1)(n+2)}}
  \lambda^{\frac{n}{(n+1)(n+2)}}  {R}^{\frac{\kappa}{(n+1)(n+2)}+\epsilon} \|f\|_{L^2(\mathbb{R}^n)}.\nonumber
\end{align*}
where the last inequality follows from \eqref{2.1n13}. It is not hard to see that
%Since $K=R^\delta$ and $R$ can be assumed to be sufficiently large and yet compared to any constant depending on $\epsilon$, we have
$\frac{ K^{3\epsilon^4}}{ K^{2\epsilon}}\ll 1$ and the induction concludes the argument for the narrow case.
\end{itemize}
\end{proof}

%\section*{Acknowledgements}

%The authors would like to thank Prof. Junfeng Li for many valuable comments and useful discussions.

\bigskip

\noindent  Dan Li

\smallskip

\noindent  Laboratory of Mathematics and Complex Systems
(Ministry of Education of China),
School of Mathematical Sciences, Beijing Normal University,
Beijing 100875, People's Republic of China

\smallskip

\noindent {\it E-mails}: \texttt{danli@mail.bnu.edu.cn}

\bigskip

\noindent Junfeng Li(Corresponding author)

\smallskip

\noindent School of Mathematical Sciences, Dalian University of Technology, Dalian, LN, 116024, China

\smallskip

\noindent{\it E-mail}: \texttt{junfengli@dlut.edu.cn}

\bigskip

\noindent Jie Xiao

\smallskip

\noindent Department of Mathematics and Statistics,
		Memorial University, St. John's, NL A1C 5S7, Canada

\smallskip

\noindent{\it E-mail}: \texttt{jxiao@math.mun.ca}

\end{document}